\title{Connectivity Properties of McKay Quivers}
\author{Hazel Browne}
\date{March 2020}
\newcommand{\C}{\mathbb{C}}
\newcommand{\Q}{\mathbb{Q}}
\newcommand{\GL}{\textnormal{GL}}
\newcommand{\SU}{\textnormal{SU}}
\newcommand{\SL}{\textnormal{SL}}
\newcommand{\ot}{\otimes}
\newcommand{\A}{A_\rho(G)}
\newcommand{\la}{\left\langle}
\newcommand{\ra}{\right\rangle}
\newcommand{\res}{\downarrow}
\newcommand{\Ga}{\Gamma_\rho(G)}
\newcommand{\1}{\mathbf{1}}
\newcommand{\lb}{\left(}
\newcommand{\rb}{\right)}
\newtheorem{lemma}[subsection]{Lemma}
\newtheorem{proposition}[subsection]{Proposition}
\newtheorem{corollary}[subsection]{Corollary}
\theoremstyle{definition}
\newtheorem{definition}[subsection]{Definition}
\newtheorem{example}[subsection]{Example}
\theoremstyle{remark}
\begin{document}

\maketitle

\begin{abstract}
    We present several results regarding the connectivity of McKay quivers of finite-dimensional complex representations of finite groups, with no restriction on the faithfulness or self-duality of the representations. We give examples of McKay quivers, as well as quivers that cannot arise as McKay quivers, and discuss a necessary and sufficient condition for two finite groups to share a connected McKay quiver.
\end{abstract}

\section{Introduction}

Let $G$ be a finite group, and let $\sigma_1,\ldots,\sigma_r$ be the non-isomorphic irreducible representations of $G$ over $\C$, with $\sigma_1$ the trivial representation. Let $V$ be a finite-dimensional complex vector space and $\rho:G\to\GL(V)$ a representation of $G$. We define a matrix $A_\rho(G)=(a_{ij})_{i,j=1}^r$, where $a_{ij}$ are the non-negative integers such that
\[
\rho\ot\sigma_i=\bigoplus_{j=1}^r \sigma_{j}^{\oplus a_{ij}}.
\]
Call $\A$ the \textit{McKay matrix} of $G$. We further define $\Gamma_{\rho}(G)$ to be the quiver with vertices $v_1,\ldots,v_r$ and with $a_{ij}$ (directed) edges from $v_i$ to $v_j$. Call $\Ga$ the \textit{McKay quiver} of $G$. The vertices of $\Ga$ are in natural bijection with the irreducible representations of $G$, with $v_i$ the vertex corresponding to $\sigma_i$. By convention, we may replace a pair of opposite-pointing directed edges with a single undirected edge, and a quiver is said to be \textit{undirected} if it can be made undirected by this process.

The construction of $\A$ and $\Ga$ was introduced by John McKay in \cite{mckay}. McKay observed several properties of $\A$, most notably that its eigenvectors are the columns of the character table for $G$, with the character values of $\rho$ the corresponding eigenvalues. He went on to note that in the special case that $G$ is a finite subgroup of $\SU(2)$ and $\rho:G\hookrightarrow\GL_2(\C)$ the natural inclusion into $\GL_2(\C)$, $\Ga$ is one of the extended Dynkin diagrams of type $A$, $D$ or $E$. Moreover, this induces a bijection between the isomorphism classes of finite subgroups of $\SU(2)$ and such Dynkin diagrams, now known as the \textit{McKay correspondence}. As a corollary, this provides a description of the eigenvectors of the extended Cartan matrices, as these are of the form $2I-\A$.

To place McKay's result in historical context, it should be noted that it was already known that the (unextended) Dynkin diagrams of types $A$, $D$ and $E$ classified the \textit{Kleinian singularities}: singularities formed by quotienting $\C^2$ by the natural action of a finite subgroup of $\SU(2)$. Such singularities were studied by Klein \cite{klein} and Du Val \cite{duval}. That the representation-theoretic view gave the same bijection as the Kleinian singularities was an empirical observation, and various explanations of this connection have since been given, for example by Steinberg \cite{steinberg} and Gonzalez-Sprinberg and Verdier \cite{gonzalez}.

Others have generalised the correspondence, both from the representation-theoretic and algebro-geometric viewpoints. On the representation-theoretic side, Auslander and Reiten \cite{auslander} provide a generalisation to arbitrary two-dimensional representations, Happel, Preiser and Ringel \cite{happel} to arbitrary fields whose characteristics do not divide $|G|$, Butin and Perets \cite{butinPerets} to finite subgroups of $\SL_3(\C)$, and Butin \cite{butin} to finite subgroups of $\SL_4(\C)$.

However, there has been little investigation of the McKay quivers of non-faithful representations. While McKay \cite{mckay} states without proof that $\Ga$ is connected if and only if $\rho$ is faithful, and Happel, Preiser and Ringel \cite{happel} show that the vertices in the connected component containing $v_1$ are precisely the vertices corresponding to representations that factor through $G/\ker\rho$, both papers narrow their focus to the faithful case shortly thereafter.

In this paper, we generalise the above connectivity results. We show that strongly and weakly connected components of McKay quivers coincide (Proposition \ref{prop:connected-components}), we provide a necessary and sufficient condition for two vertices to lie in the same connected component (Proposition \ref{prop:connected-components}), and we determine how many connected components a McKay quiver has (Proposition \ref{prop:number-of-components}). We describe the connected component containing $v_1$, and in certain cases, the other connected components (Propositions \ref{prop:components-with-1d-rep} and \ref{prop:direct-product}). Finally we consider when two finite groups $G_1$ and $G_2$ can share a McKay quiver $\Gamma_{\rho_1}(G_1)\cong\Gamma_{\rho_2}(G_2)$ for some faithful representations $\rho_i$ of $G_i$ (Proposition \ref{prop:isographical-groups}).

Throughout, we provide examples of McKay quivers, connected components of McKay quivers, and quivers that cannot arise as McKay quivers.

\textbf{Acknowledgements.} This paper is based on an undergraduate project supervised by Anthony Henderson at the University of Sydney, and the author is extremely grateful for a year of patient help and guidance.

\section{Directedness and weightability}

For a quiver $Q$, write $A(Q)$ for the adjacency matrix of $Q$ (that is, the matrix whose $(i,j)$-entry is the number of arrows from the $i$th to the $j$th vertex of $Q$). For example, $A(\Ga)=\A$. All quivers are assumed to be finite.

Let $\chi_\rho$ be the character of $\rho$, and let $\chi_1,\ldots,\chi_r$ be the characters of $\sigma_1,\ldots,\sigma_r$ respectively. Write $\la-,-\ra$ for the inner product on class functions of $G$.

Since the classification of finite subgroups of $\SU(2)$ is known, it is possible to verify the McKay correspondence by direct computation. However, we can also use basic properties of McKay quivers to show that the McKay quiver of a finite subgroup of $\SU(2)$ must be one of the extended Dynkin diagrams of types $A$, $D$ or $E$, without making any direct calculations. To do this, we will use a characterisation of the extended Dynkin diagrams of types $A$, $D$ and $E$, for which we require the following definition.
\vspace{4mm}
\begin{definition} For a positive integer $k$, we say that a quiver $Q$ is \textit{$k$-weightable} if $A(Q)$ has a $k$-eigenvector whose components are positive integers. Such an eigenvector is called a \textit{$k$-weight vector} of $Q$. We can view a $k$-weight vector as a function on the vertices of $Q$, assigning to the $i$th vertex of $Q$ the $i$th component of the $k$-weight vector; call such a function a \textit{$k$-weighting} and call the values it takes \textit{weights}. We say that $Q$ is \textit{weightable} if it is $k$-weightable for some $k$.
\end{definition}

We now give a characterisation of the extended Dynkin diagrams of types $A$, $D$ and $E$ (see \cite[Theorem 2]{happel} for a proof).
\vspace{4mm}
\begin{proposition}
The extended Dynkin diagrams of types $A$, $D$ and $E$ are precisely the finite, connected, undirected quivers that are $2$-weightable.
\end{proposition}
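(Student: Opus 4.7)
The plan is to prove both directions of the biconditional separately. For the ``if'' direction, that each of $\tilde{A}_n$, $\tilde{D}_n$, $\tilde{E}_6$, $\tilde{E}_7$, $\tilde{E}_8$ is $2$-weightable, I would exhibit explicit $2$-weight vectors: the classical marks on each diagram—all $1$'s on $\tilde{A}_n$, the labels $(1,1,2,\ldots,2,1,1)$ on $\tilde{D}_n$, and the standard marks on the three $\tilde{E}$ diagrams. In each case a quick vertex-by-vertex check confirms that $2v_i=\sum_{j\sim i}v_j$.

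For the converse, suppose $Q$ is a finite, connected, undirected quiver with a $2$-weight vector $v>0$, and let $A=A(Q)$, which is symmetric with nonnegative integer entries. The central step is to show that $M:=2I-A$ is positive semidefinite. Given any $x\in\mathbb{R}^n$, set $y_i=x_i/v_i$, so that $x^T A x=\sum_{i,j}a_{ij}v_iv_jy_iy_j$; then the AM--GM bound $2y_iy_j\leq y_i^2+y_j^2$ combined with the identity $\sum_j a_{ij}v_j=2v_i$ yields $x^T A x\leq 2x^T x$. Hence $M$ is positive semidefinite with $v$ in its kernel, and Perron--Frobenius (applied to the connected nonnegative matrix $A$) further implies that this kernel is one-dimensional.

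At this point $M$ is a connected, symmetric, positive semidefinite integer matrix of corank $1$ with diagonal $2$ and nonpositive off-diagonal entries. Such matrices are classified, up to permutation, as the simply-laced affine generalized Cartan matrices, corresponding precisely to the extended Dynkin diagrams $\tilde{A}_n$, $\tilde{D}_n$, $\tilde{E}_6$, $\tilde{E}_7$, $\tilde{E}_8$. This classification step is the main obstacle: if one wishes to avoid appealing directly to affine Lie theory, it can be carried out combinatorially, using that every proper connected subquiver of $Q$ gives a positive-definite principal submatrix of $M$ (since the kernel of $M$ is spanned by the strictly positive vector $v$) and hence must be a finite Dynkin diagram, then bounding the degrees, branch vertices and cycle structure of $Q$ and enumerating. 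The detailed enumeration is the substantive content of \cite[Theorem~2]{happel}.
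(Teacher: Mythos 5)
Your outline is correct, and it does genuinely more than the paper, which offers no proof of this proposition at all and simply cites \cite[Theorem 2]{happel}. Your ``if'' direction (exhibiting the standard marks as $2$-weight vectors) is a routine check, and your positive-semidefiniteness argument for $M=2I-A$ is the standard Vinberg-style computation: the substitution $y_i=x_i/v_i$ together with $2y_iy_j\leq y_i^2+y_j^2$ and $\sum_j a_{ij}v_j=2v_i$ does give $x^TAx\leq 2x^Tx$ (symmetry of $A$, guaranteed by undirectedness, is used when you symmetrise the bound, so it is worth saying so explicitly), and Perron--Frobenius applies because a connected undirected quiver is strongly connected, giving corank $1$. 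You are also right that the remaining classification of connected corank-one positive semidefinite symmetric integer matrices is where all the real work lives, and deferring that enumeration to \cite[Theorem 2]{happel} puts you on exactly the same footing as the paper. One small gap to patch: you assert that $M$ has diagonal $2$, which assumes $Q$ has no loops; under the paper's conventions an undirected quiver may have $a_{ii}=2$ (a single undirected self-loop), and the one-vertex quiver with one loop is connected, undirected and $2$-weightable, so you must either treat it separately (it is $\tilde{A}_0$ in the convention matching the trivial subgroup of $\SU(2)$) or rule loops out by a short argument before invoking the Cartan-matrix classification.
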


The fact that the McKay quivers of finite subgroups of $\SU(2)$ are connected, undirected and $2$-weightable follows from more general properties of McKay quivers that we state in this section and the next.

We begin with $2$-weightability. This is a special case of the following observation due to McKay \cite{mckay}.
\vspace{4mm}
\begin{proposition}\label{prop:eigenvectors-of-A}
The eigenvectors of $\A$ are the columns of the character table for $G$, and the eigenvalue corresponding to a column is the value of $\chi_\rho$ in that column.
\end{proposition}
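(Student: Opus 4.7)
The plan is to unwind the definition of $\A$ at the level of characters and then simply read off the eigenvalue equation column by column of the character table.

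First I would take characters on both sides of the defining relation $\rho\otimes\sigma_i=\bigoplus_j\sigma_j^{\oplus a_{ij}}$, which yields the identity
\[
\chi_\rho\,\chi_i=\sum_{j=1}^r a_{ij}\,\chi_j
\]
of class functions on $G$. Evaluating at any element $g\in G$ gives the numerical identity $\chi_\rho(g)\chi_i(g)=\sum_j a_{ij}\chi_j(g)$, which I would then recognise as exactly the $i$-th coordinate of the matrix product $\A\,v_g$, where
\[
v_g=\bigl(\chi_1(g),\chi_2(g),\ldots,\chi_r(g)\bigr)^{T}
\]
is the column of the character table indexed by the conjugacy class of $g$. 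Hence $\A\,v_g=\chi_\rho(g)\,v_g$, so each such column is an eigenvector with eigenvalue $\chi_\rho(g)$.

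To finish, I would note that there are exactly $r$ conjugacy classes of $G$, giving $r$ such columns, and that these columns are linearly independent (the character table is an invertible $r\times r$ matrix, by the second orthogonality relation for characters). They therefore form a basis of $\C^r$ consisting of eigenvectors of $\A$, so these are all of the eigenvectors.

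The argument is essentially a one-line calculation once the defining relation is translated into characters, so there is no real obstacle; the only point worth flagging is the appeal to the invertibility of the character table to guarantee that we have produced a full eigenbasis rather than merely some eigenvectors.
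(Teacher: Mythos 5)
Your proof is correct and follows essentially the same route as the paper: take characters of the defining decomposition and read off the eigenvalue equation $\A\,v_g=\chi_\rho(g)\,v_g$ column by column. The only difference is that you additionally invoke the invertibility of the character table to conclude that these columns exhaust the eigenvectors, a point the paper's proof leaves implicit.
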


\begin{proof}
For each $i$, by definition of $\A=(a_{ij})_{i,j=1}^r$, we have
\[
\sum_{j=1}^r a_{ij}\chi_j=\chi_\rho\chi_i,
\]
and so for any group element $g\in G$, we have
\[
\A(\chi_j(g))_{j=1}^r=\lb\sum_{j=1}^r a_{ij}\chi_j(g)\rb_{i=1}^r=\chi_\rho(g)(\chi_i(g))_{i=1}^r.\qedhere
\]
\end{proof}
\vspace{4mm}
\begin{corollary}
The McKay quiver $\Ga$ is $(\dim\rho)$-weightable. In particular, if $\rho$ is the inclusion of a finite subgroup of $\SU(2)$, then $\Ga$ is $2$-weightable.
\end{corollary}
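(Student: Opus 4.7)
The plan is to apply Proposition \ref{prop:eigenvectors-of-A} to a carefully chosen group element, namely the identity $1 \in G$. That proposition tells us the eigenvectors of $\A$ are indexed by the columns of the character table, with eigenvalues $\chi_\rho(g)$ for each $g$. Evaluating at $g = 1$ is the natural choice because both sides then have a clear interpretation in terms of dimensions.

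Concretely, I would observe that the column of the character table corresponding to $g = 1$ is
\[
(\chi_1(1), \chi_2(1), \ldots, \chi_r(1))^T = (\dim\sigma_1, \dim\sigma_2, \ldots, \dim\sigma_r)^T,
\]
and by Proposition \ref{prop:eigenvectors-of-A} this is an eigenvector of $\A$ with eigenvalue $\chi_\rho(1) = \dim\rho$. Since the dimensions of nonzero complex representations are positive integers, this vector has positive integer entries, so it is a $(\dim\rho)$-weight vector of $\Ga$ by definition. Hence $\Ga$ is $(\dim\rho)$-weightable.

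The second sentence is then immediate: if $\rho$ is the inclusion of a finite subgroup of $\SU(2)$ into $\GL_2(\C)$, then $\dim\rho = 2$, so $\Ga$ is $2$-weightable.

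There is no real obstacle here — the corollary is essentially a direct specialisation of the preceding proposition, and the only observation beyond plugging in $g = 1$ is the trivial remark that irreducible representations have positive integer dimensions. The assignment of weights $\dim \sigma_i$ to the vertex $v_i$ is also conceptually pleasing: it is exactly the weighting that reproduces the tensor product dimension identity $(\dim\rho)(\dim\sigma_i) = \sum_j a_{ij}\dim\sigma_j$.
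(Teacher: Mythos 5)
Your proof is correct and follows exactly the paper's argument: apply Proposition \ref{prop:eigenvectors-of-A} to the identity column of the character table, note that its entries are the positive integers $\dim\sigma_i$, and specialise to $\dim\rho=2$ for subgroups of $\SU(2)$. No differences worth noting.
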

\begin{proof}
The column of the character table corresponding to the identity element of $G$ has positive integer entries, and by Proposition \ref{prop:eigenvectors-of-A} it is a $(\dim\rho)$-eigenvector of $\A$. In the case of finite subgroups of $\SU(2)$, $\dim\rho=2$ so we have $2$-weightability.
\end{proof}

A quiver is undirected if and only if its adjacency matrix is symmetric and the diagonal entries are even (to avoid directed self-loops). Symmetry of the adjacency matrix is easy to understand in the case of McKay quivers; this observation is also due to McKay \cite{mckay}.

Write $\rho^*$ for the dual representation of $\rho$.
\vspace{4mm}
\begin{proposition}\label{prop:symmetric}
The quivers $\Gamma_{\rho}(G)$ and $\Gamma_{\rho^*}(G)$ differ by a reversal of the arrows; that is, $\A^{T}=A_{\rho^*}(G)$. Moreover, $A_{\rho}(G)$ is symmetric if and only if $\rho$ is self-dual.
\end{proposition}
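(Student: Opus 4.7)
The plan is to unpack both $\A^T$ and $A_{\rho^*}(G)$ entrywise as inner products of characters and verify they agree termwise. By the very definition of $\A$, the entry $a_{ij}$ is the multiplicity of $\sigma_j$ in $\rho\ot\sigma_i$, which equals $\la\chi_\rho\chi_i,\chi_j\ra$ by orthogonality of irreducible characters. Symmetrically, the $(j,i)$-entry of $A_{\rho^*}(G)$ equals $\la\chi_{\rho^*}\chi_j,\chi_i\ra$, so it suffices to show these two inner products coincide.

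The key ingredient is the standard identity $\chi_{\rho^*}=\cc{\chi_\rho}$. Writing both inner products out via the formula $\la\alpha,\beta\ra=\frac{1}{|G|}\sum_{g\in G}\alpha(g)\cc{\beta(g)}$ and using this identity, one sees that $\la\chi_{\rho^*}\chi_j,\chi_i\ra$ is precisely the complex conjugate of $\la\chi_\rho\chi_i,\chi_j\ra$. Since $a_{ij}$ is a non-negative integer, taking its conjugate leaves it unchanged, and one obtains $a_{ij}=(A_{\rho^*}(G))_{ji}$, hence $\A^T=A_{\rho^*}(G)$.

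For the equivalence, one direction is immediate: if $\rho$ is self-dual then $A_{\rho^*}(G)=\A$, so the first part gives $\A^T=\A$. For the converse, suppose $\A=\A^T=A_{\rho^*}(G)$, and I would read off the first row, corresponding to the trivial representation $\sigma_1$. Since $\rho\ot\sigma_1\cong\rho$, this row records the multiplicities of the $\sigma_j$ in $\rho$, while the same row of $A_{\rho^*}(G)$ records those in $\rho^*$; equality of the rows then forces $\rho\cong\rho^*$ by uniqueness of decomposition into irreducibles.

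No real obstacle is expected; the argument is essentially bookkeeping with conjugates and the reality of multiplicities. A more conceptual route would replace the character computation with the tensor--hom adjunction $\mathrm{Hom}_G(\rho\ot\sigma_i,\sigma_j)\cong\mathrm{Hom}_G(\sigma_i,\rho^*\ot\sigma_j)$ together with the symmetry $\dim\mathrm{Hom}_G(V,W)=\dim\mathrm{Hom}_G(W,V)$ for complex representations of finite groups, but since Proposition \ref{prop:eigenvectors-of-A} has just been phrased in terms of characters, the character-theoretic route fits the surrounding exposition more naturally.
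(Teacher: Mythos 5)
Your proposal is correct and follows essentially the same route as the paper: both compute $a_{ij}=\la\chi_\rho\chi_i,\chi_j\ra$ and use $\chi_{\rho^*}=\cc{\chi_\rho}$ together with the reality of multiplicities to identify this with the $(j,i)$-entry of $A_{\rho^*}(G)$, and both prove the converse by comparing the row corresponding to the trivial representation, which records the multiplicities of each $\sigma_j$ in $\rho$ and $\rho^*$. The only difference is cosmetic: the paper packages the conjugation step as a chain of inner-product identities rather than writing out the averaging formula explicitly.
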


\begin{proof}
Writing $a_{ij}$ and $a_{ij}^*$ for the $(i,j)$-entries of $A_{\rho}(G)$ and $A_{\rho^*}(G)$ respectively, we have
\[
a_{ij}=\la \chi_\rho\chi_i,\chi_j\ra = \la \chi_j,\chi_\rho\chi_i\ra = \la \chi_{\rho^*}\chi_j,\chi_i\ra = a_{ji}^*.
\]
Thus if $\rho$ is self-dual then $A_{\rho}(G)$ is symmetric.

On the other hand, if $A_{\rho}(G)$ is symmetric then $a_{1j}=a_{j1}=a_{1j}^*$ for all $j$. But $a_{1j}$ and $a_{1j}^*$ are the multiplicities of $\sigma_j$ in $\rho$ and $\rho^*$ respectively, so $\rho$ and ${\rho}^*$ are isomorphic.
\end{proof}

If $\rho$ is the inclusion of a finite subgroup $G$ of $\SU(2)$ into $\GL_2(\C)$, then since $\SU(2)\subset\SL_2(\C)$, $\rho$ preserves the determinant -- a non-degenerate skew-symmetric bilinear form -- and this gives an isomorphism between $\rho$ and $\rho^*$. Thus $A_\rho(G)$ is symmetric.

It is possible to make a fairly direct argument for why the McKay quivers of finite subgroups of $\SU(2)$ must have no self-loops (and hence no directed self-loops), see \cite[1(4)(b)]{steinberg}. However, it also follows from the following more general proposition, which provides us with a broad class of undirected McKay quivers.
\vspace{4mm}
\begin{proposition}\label{prop:diagonal-entries}
If $\rho$ preserves a non-degenerate skew-symmetric bilinear form then the diagonal entries of $A_\rho(G)$ are even.
\end{proposition}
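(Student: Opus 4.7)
The plan is to reinterpret the diagonal entry $a_{ii}$ as the dimension of a space of $G$-invariants, and then exhibit a non-degenerate skew-symmetric bilinear form on that space, which will force $a_{ii}$ to be even.

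First, by definition $a_{ii}$ is the multiplicity of $\sigma_i$ in $\rho\ot\sigma_i$, so
\[
a_{ii}=\dim\textnormal{Hom}_G(\sigma_i,\rho\ot\sigma_i)=\dim(V\ot\textnormal{End}(\sigma_i))^G,
\]
where $V$ is the representation space of $\rho$. On the ambient space $V\ot\textnormal{End}(\sigma_i)$ I would construct a $G$-invariant bilinear form $B$ as the tensor product of two ingredients: the given non-degenerate $G$-invariant skew-symmetric form $\omega$ on $V$, and the symmetric non-degenerate $G$-invariant trace form $\tau(A,B)=\textnormal{tr}(AB)$ on $\textnormal{End}(\sigma_i)$. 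Since the tensor product of a skew form and a symmetric form is skew, $B$ is skew-symmetric; non-degeneracy and $G$-invariance are inherited from the two factors.

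The key step is then to check that the restriction of $B$ to $M:=(V\ot\textnormal{End}(\sigma_i))^G$ remains non-degenerate — this is where I expect the main obstacle, since restricting a non-degenerate form to a subspace can become degenerate. The fix is to use the $G$-equivariance: $B$ induces a $G$-equivariant isomorphism $V\ot\textnormal{End}(\sigma_i)\to(V\ot\textnormal{End}(\sigma_i))^*$, and because the functor of taking $G$-invariants is exact on finite-dimensional complex $G$-modules (by Maschke), applying it yields an isomorphism $M\to((V\ot\textnormal{End}(\sigma_i))^*)^G\cong M^*$, using the standard identification $(X^*)^G\cong(X^G)^*$ (which in turn follows from decomposing $X$ as $X^G$ plus a sum of non-trivial isotypic components).

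Having established that $M$ carries a non-degenerate skew-symmetric bilinear form, the conclusion is immediate: any finite-dimensional space with such a form has even dimension, so $a_{ii}=\dim M$ is even.
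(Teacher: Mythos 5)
Your proof is correct and is essentially the paper's argument: your space $(V\ot\textnormal{End}(\sigma_i))^G$ is the trivial isotypic component of $\rho\ot\sigma_i\ot\sigma_i^*$, and your tensor form $\omega\ot\tau$ with $\tau(A,B)=\textnormal{tr}(AB)$ is exactly the paper's form $B'(v\ot w\ot f, v'\ot w'\ot f')=B(v,v')f(w')f'(w)$. The only difference is that you spell out (correctly, via exactness of invariants and $(X^*)^G\cong(X^G)^*$) the step that invariant non-degenerate forms restrict non-degenerately to the fixed-point subspace, which the paper asserts without proof.
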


\begin{proof}
If $\rho$ preserves a non-degenerate skew-symmetric bilinear form $B$, then for each $i$, $\rho\ot \sigma_i\ot \sigma_i^*$ preserves the non-degenerate skew-symmetric bilinear form $B'$ given by
\[
B'(v\otimes w \otimes f, v'\ot w'\ot f') = B(v,v') f(w')f'(w).
\]
Invariant non-degenerate forms restrict to non-degenerate forms on the fixed-point subspace, which is the isotypic component of the trivial representation $\sigma_1$. Since non-degenerate skew-symmetric forms exist only on even-dimensional vector spaces, $\dim(\rho\ot\sigma_i\ot\sigma_i^*)_{[\sigma_1]}=\la \chi_\rho\chi_i\chi_i^*,\chi_{1}\ra=\la \chi_\rho\chi_i,\chi_i\ra=a_{ii}$ is even.
\end{proof}
\vspace{4mm}
\begin{example}
Suppose that $G$ is the binary dihedral group of order $12$, with presentation
\[
G=\la x,y,-1\mid x^2=y^3=(xy)^2=-1 \ra,
\]
where $-1$ denotes a central element of order $2$. Then $G$ has six irreducible representations $\sigma_1,\ldots,\sigma_6$ with corresponding characters $\chi_{1},\ldots,\chi_{6}$ shown in Figure \ref{fig:characters-of-bd12}.
\begin{figure}[h]
\begin{center}
  \begin{tabular}{|c | c c c c c c|}
    \hline
    class & $1$ & $-1$ & $x$ & $-x$ & $y$ & $y^2$ \\ \hline
    \# & $1$ & $1$ & $2$ & $2$ & $3$ & $3$\\ \hline
    $\chi_{1}$ & $1$ & $1$ & $1$ & $1$ & $1$ & $1$\\
    $\chi_{2}$ & $1$ & $1$ & $-1$ & $-1$ & $1$ & $1$ \\
    $\chi_{3}$ & $1$ & $-1$ & $i$ & $-i$ & $-1$ & $1$ \\
    $\chi_{4}$ & $1$ & $-1$ & $-i$ & $i$ & $-1$ & $1$ \\
    $\chi_{5}$ & $2$ & $2$ & $0$ & $0$ & $-1$ & $-1$ \\
    $\chi_{6}$ & $2$ & $-2$ & $0$ & $0$ & $1$ & $-1$ \\
    \hline
  \end{tabular}
\end{center}
\caption{The character table of the binary dihedral group of order $12$.}
\label{fig:characters-of-bd12}
\end{figure}

Figure \ref{fig:quivers-for-bd12} shows three McKay quivers of $G$ overlaid on the same vertex set: $\Gamma_{\sigma_3}(G)$, $\Gamma_{\sigma_5}(G)$, and $\Gamma_{\sigma_6}(G)$. Since $\sigma_3$ is not self-dual, $\Gamma_{\sigma_3}(G)$ is directed. Since $\sigma_5$ is self-dual, $\Gamma_{\sigma_5}(G)$ is invariant under edge reversal; however, it has directed self-loops. Since $\sigma_6$ preserves a non-degenerate skew-symmetric form (as can be verified by the Frobenius-Schur indicator), $\Gamma_{\sigma_6}(G)$ is undirected.

\begin{figure}[h]
    \centering
    \includegraphics[width=8cm]{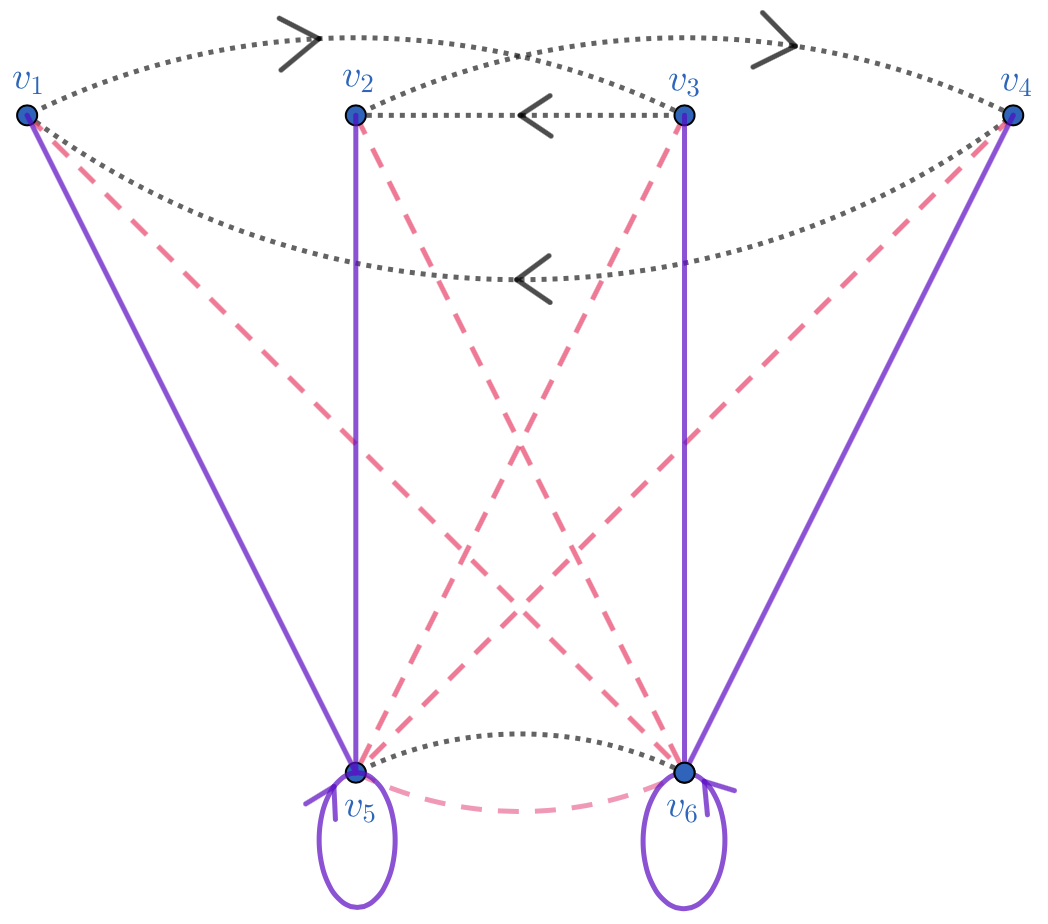}
    \caption{Three McKay quivers of the binary dihedral group of order $12$: $\Gamma_{\sigma_3}(G)$ is in dotted black; $\Gamma_{\sigma_5}(G)$ is in solid purple; $\Gamma_{\sigma_6}(G)$ is in dashed pink.}
    \label{fig:quivers-for-bd12}
\end{figure}

\end{example}

\section{Connectivity}

A priori, when considering directed McKay quivers, we must distinguish between the notions of \textit{strong connectivity} -- the existence of a path from $v_i$ to $v_j$ for any choice of vertices $v_i$ and $v_j$ -- and \textit{weak connectivity} -- the connectivity of the undirected graph that results from forgetting the direction of the edges. Since a union of two strongly/weakly connected subquivers with a vertex in common is strongly/weakly connected, \textit{strongly/weakly connected components} -- that is, maximal strongly/weakly connected subquivers -- split the vertices of a quiver into disjoint subsets.

However, we will ultimately see that strongly and weakly connected components of $\Gamma_{\rho}(G)$ coincide. Letting $N=\ker\rho$ and writing $\res_N$ for the restriction of a representation or character to $N$, we will show the following.
\vspace{4mm}
\begin{proposition}\label{prop:connected-components}
The following are equivalent:
\begin{enumerate}
    \item $v_i$ and $v_j$ lie in the same strongly connected component.
    \item $v_i$ and $v_j$ lie in the same weakly connected component.
    \item $\chi_i\res_N$ and $\chi_j\res_N$ are multiples of each other.
\end{enumerate}
\end{proposition}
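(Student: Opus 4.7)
The implication $(1)\Rightarrow(2)$ is immediate from the definitions, so the plan is to close the cycle by proving $(2)\Rightarrow(3)$ and $(3)\Rightarrow(1)$.

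For $(2)\Rightarrow(3)$, I will first handle a single edge and then propagate along paths. Suppose there is an arrow $v_i\to v_j$, i.e.\ $\sigma_j$ is a summand of $\rho\ot\sigma_i$. Since $N=\ker\rho$, we have $\rho\res_N\cong\1_N^{\oplus\dim\rho}$, so $(\rho\ot\sigma_i)\res_N\cong(\sigma_i\res_N)^{\oplus\dim\rho}$ and hence $\sigma_j\res_N$ is a summand of $(\sigma_i\res_N)^{\oplus\dim\rho}$. By Clifford's theorem, each of $\chi_i\res_N$ and $\chi_j\res_N$ is a positive integer multiple of the sum of characters in a single $G$-orbit in $\text{Irr}(N)$; the inclusion of constituents just established forces the two orbits to coincide, so the two restricted characters are proportional. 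A reverse arrow $v_j\to v_i$ is handled identically, and propagation along a weak path is then immediate.

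For $(3)\Rightarrow(1)$, assume $\chi_i\res_N$ and $\chi_j\res_N$ are proportional; I will exhibit a directed path from $v_j$ to $v_i$, with the opposite direction following by swapping $i$ and $j$. Proportionality gives $\la\chi_i\res_N,\chi_j\res_N\ra_N>0$, i.e.\ the $N$-fixed subspace $W$ of $V_i\ot V_j^*$ is nonzero. Since $N$ is normal in $G$, $W$ is a $G$-subrepresentation of $V_i\ot V_j^*$ that factors through $G/N$. If $i\neq j$ then $(V_i\ot V_j^*)^G\cong\text{Hom}_G(V_j,V_i)=0$, so $W$ admits some nontrivial irreducible constituent $\tau$ that factors through $G/N$; equivalently, $\sigma_i$ is a summand of $\tau\ot\sigma_j$. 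The induced faithful representation $\bar\rho:G/N\to\GL(V)$ satisfies the hypothesis of the Burnside--Brauer--Steinberg theorem, so every irreducible representation of $G/N$ occurs as a summand of some tensor power of $\bar\rho$. Hence $\tau$ is a summand of $\rho^{\ot k}$ for some $k\geq 1$ (nontriviality of $\tau$ excludes $k=0$), so $\sigma_i$ is a summand of $\rho^{\ot k}\ot\sigma_j$ and there is a directed path of length $k$ from $v_j$ to $v_i$ in $\Ga$. The case $i=j$ is trivial.

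The hard part is the direction $(3)\Rightarrow(1)$: once the $N$-fixed space $W$ is introduced, one must combine the non-representation-theoretic observation that $W$ cannot be entirely $G$-trivial (when $i\neq j$) with the Burnside--Brauer--Steinberg theorem in order to conclude that some nontrivial summand of $W$ actually appears in a tensor power of $\rho$. The remaining directions are bookkeeping with Clifford's theorem and the triviality of $\rho\res_N$.
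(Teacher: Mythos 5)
Your proof is correct and follows essentially the same route as the paper: $(2)\Rightarrow(3)$ via Clifford's theorem together with the triviality of $\rho\res_N$, and $(3)\Rightarrow(1)$ via Burnside's theorem applied to the induced faithful representation of $G/N$. The only cosmetic difference is that the paper invokes Burnside in the form $\delta_N=\sum\mu_l\chi_\rho^l$ and pairs this directly against $\chi_i\overline{\chi_j}$, whereas you extract an explicit nontrivial constituent $\tau$ of $(V_i\otimes V_j^*)^N$ and place it in a tensor power of $\rho$.
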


To prove Proposition \ref{prop:connected-components}, and other results in this section, we will rely heavily on the following lemma.
\vspace{4mm}
\begin{lemma}\label{lem:walks}
There is a walk of length $L\geq 0$ from $v_i$ to $v_j$ in $\Ga$ if and only if $[\rho^{\otimes L}\otimes\sigma_i:\sigma_j]\geq 1$, where $[- : \sigma_j]$ denotes the multiplicity of $\sigma_j$ in a representation of $G$.
\end{lemma}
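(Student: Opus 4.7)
The plan is a straightforward induction on the walk length $L$, exploiting the fact that the edges of $\Ga$ directly encode multiplicities in $\rho\otimes\sigma_k$ and that walks in a quiver are counted by powers of the adjacency matrix.

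For the base case $L=0$, a walk of length $0$ from $v_i$ to $v_j$ exists precisely when $i=j$, while $\rho^{\otimes 0}$ is the trivial representation $\sigma_1$, so $\rho^{\otimes 0}\otimes\sigma_i=\sigma_i$ and $[\sigma_i:\sigma_j]\geq 1$ iff $i=j$. Both sides agree.

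For the inductive step, I would assume the equivalence for $L$ and prove it for $L+1$. A walk of length $L+1$ from $v_i$ to $v_j$ is a walk of length $L$ from $v_i$ to some intermediate $v_k$ followed by a single edge $v_k\to v_j$; the latter exists iff $a_{kj}\geq 1$, i.e.\ iff $[\rho\otimes\sigma_k:\sigma_j]\geq 1$. On the representation side, writing $m_k=[\rho^{\otimes L}\otimes\sigma_i:\sigma_k]$, I would decompose
\[
\rho^{\otimes(L+1)}\otimes\sigma_i \;\cong\; \rho\otimes\bigoplus_{k=1}^r \sigma_k^{\oplus m_k} \;\cong\; \bigoplus_{k=1}^r (\rho\otimes\sigma_k)^{\oplus m_k},
\]
so that
\[
[\rho^{\otimes(L+1)}\otimes\sigma_i:\sigma_j] \;=\; \sum_{k=1}^r m_k\, a_{kj}.
\]
Since all $m_k$ and $a_{kj}$ are non-negative integers, this sum is $\geq 1$ iff there exists some $k$ with both $m_k\geq 1$ and $a_{kj}\geq 1$. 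By the inductive hypothesis, $m_k\geq 1$ is equivalent to the existence of a walk of length $L$ from $v_i$ to $v_k$, so the sum is positive iff there is a walk of length $L+1$ from $v_i$ to $v_j$, closing the induction.

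There is no real obstacle here; the argument is essentially the classical ``$(A^L)_{ij}$ counts walks of length $L$'' identity, made visible by using the defining property of $\A$ to convert tensor-product multiplicities into sums of products of matrix entries. The only point to handle carefully is the base case and the observation that positivity of a sum of non-negative integers is equivalent to positivity of some summand, so that the ``$\geq 1$'' bookkeeping matches on both sides.
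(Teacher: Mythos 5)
Your proof is correct and follows essentially the same route as the paper's: induction on $L$, splitting a walk of length $L+1$ into a walk of length $L$ plus one edge, and matching this with the identity $[\rho^{\otimes(L+1)}\otimes\sigma_i:\sigma_j]=\sum_k m_k a_{kj}$. You simply make explicit the final multiplicity computation that the paper leaves as an asserted equivalence.
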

\begin{proof}
The proof is by induction, with the $L=0$ case trivial. A walk of length $L\geq 1$ from $v_i$ to $v_j$ is equivalent to a walk of length $L-1$ from $v_i$ to a vertex $v_k$ such that $[\rho\otimes \sigma_k:\sigma_j]\geq 1$. Using our inductive hypothesis, this is equivalent to the existence of $k$ such that $[\rho^{\otimes(L-1)}\otimes \sigma_i:\sigma_k]\geq 1$ and $[\rho\otimes \sigma_k:\sigma_j]\geq 1$, which is in turn equivalent to $[\rho^{\otimes L}\otimes \sigma_i:\sigma_j]\geq 1$.
\end{proof}

Let $\delta_{\1}$ be the class function whose value on the identity element $\1$ is $1$ and whose value on all other group elements is $0$. Our first application of Lemma \ref{lem:walks} was observed by McKay \cite{mckay}, but we will give a proof that we will be able to modify to prove a more general result (Proposition \ref{prop:characters-restricted-and-walks}).
\vspace{4mm}
\begin{proposition}
If $\rho$ is faithful then $\Ga$ is strongly connected.
\end{proposition}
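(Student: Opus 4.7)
The plan is to use Lemma \ref{lem:walks} to translate strong connectivity into a statement about tensor powers: $\Ga$ is strongly connected iff for every pair $i,j$ there exists $L\geq 0$ with $[\rho^{\otimes L}\otimes\sigma_i:\sigma_j]\geq 1$. Equivalently, writing this multiplicity as the inner product $\la\chi_\rho^L\chi_i,\chi_j\ra$, I want to show that no such inner product can vanish for all $L$.

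The key idea is the standard Burnside-type observation: if $\rho$ is faithful, then $\chi_\rho(g)=\dim\rho$ exactly when $g=\1$. Let $\alpha_1,\ldots,\alpha_s$ be the distinct values of $\chi_\rho$, with $\alpha_1=\dim\rho$. Then the class function
\[
f(g):=\prod_{k=2}^{s}\lb\chi_\rho(g)-\alpha_k\rb
\]
is supported on $\{\1\}$ and nonzero there, so $f$ is a nonzero scalar multiple of $\delta_{\1}$. Expanding the product, $\delta_{\1}$ lies in the $\C$-span of $\1,\chi_\rho,\chi_\rho^2,\ldots,\chi_\rho^{s-1}$.

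Now suppose for contradiction that there exist $i,j$ with $\la\chi_\rho^L\chi_i,\chi_j\ra=0$ for all $L\geq 0$. By linearity and the previous paragraph, this forces $\la\delta_{\1}\chi_i,\chi_j\ra=0$. But computing this inner product directly gives
\[
\la\delta_{\1}\chi_i,\chi_j\ra=\frac{1}{|G|}\sum_{g\in G}\delta_{\1}(g)\chi_i(g)\cc{\chi_j(g)}=\frac{\chi_i(\1)\cc{\chi_j(\1)}}{|G|}=\frac{\dim\sigma_i\cdot\dim\sigma_j}{|G|},
\]
which is strictly positive, a contradiction. Hence for every $i,j$ some $L$ works, Lemma \ref{lem:walks} supplies a walk $v_i\to v_j$, and $\Ga$ is strongly connected.

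I do not foresee any real obstacle: the argument is the classical Burnside theorem applied to a twisted multiplicity. The one thing worth double-checking is that the $L=0$ case of Lemma \ref{lem:walks} is handled correctly (it is, because a walk of length $0$ from $v_i$ to itself corresponds to $[\sigma_i:\sigma_i]=1$, matching our conventions). The proof is written in a form that will generalise cleanly to Proposition \ref{prop:characters-restricted-and-walks}, as the author signals.
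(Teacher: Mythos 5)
Your proof is correct and takes essentially the same route as the paper: both reduce strong connectivity to Lemma \ref{lem:walks} and use Burnside's observation that $\delta_{\1}$ lies in the span of the powers of $\chi_\rho$ when $\rho$ is faithful, then conclude from $\la\delta_{\1}\chi_i,\chi_j\ra\neq 0$ that some power gives a non-zero multiplicity. The only difference is cosmetic: you prove the Burnside fact inline and compute the inner product explicitly, whereas the paper cites Burnside and simply asserts the non-vanishing.
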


\begin{proof}
If $\rho$ is faithful, then $\delta_{\1}=\sum\mu_{l}\chi_\rho^l$ for some finite collection of $\mu_l\in\C$ (this is due to Burnside \cite[XV.IV]{burnside}). Then for any $i,j$, we have
\[
\la \delta_{\1}\chi_i,\chi_j\ra = \sum\mu_l \la \chi_\rho^l\chi_i,\chi_j \ra.
\]
Since the left hand side is non-zero, $\la\chi_\rho^l\chi_i,\chi_j\ra$ is non-zero for some $l$, there is a walk from $v_i$ to $v_j$.
\end{proof}

In the non-faithful case, since $\rho$ descends to a faithful representation of $G/N$, we can replace $\delta_{\1}$ by $\delta_{N}$ in the above proof, giving the following:
\vspace{4mm}
\begin{proposition}\label{prop:characters-restricted-and-walks}
If $\chi_i\res_{N}$ and $\chi_j\res_{N}$ are multiples of each other, then there is a walk from $v_i$ to $v_j$.
\end{proposition}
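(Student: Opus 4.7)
The plan is to mimic the faithful-case proof almost verbatim, replacing the identity element with the normal subgroup $N=\ker\rho$. Since $\rho$ is trivial on $N$, it descends to a faithful representation $\bar\rho$ of the quotient $G/N$, and Burnside's theorem (applied to $\bar\rho$ on $G/N$) gives an expression
\[
\delta_{\bar{\1}}=\sum_l \mu_l \chi_{\bar\rho}^{\,l}
\]
as class functions on $G/N$. Pulling back along the quotient map $G\to G/N$ turns $\delta_{\bar{\1}}$ into $\delta_N$ (the indicator function of $N$, scaled so it equals $1$ on $N$) and turns $\chi_{\bar\rho}$ into $\chi_\rho$, yielding the identity $\delta_N=\sum_l \mu_l \chi_\rho^{\,l}$ of class functions on $G$.

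Next I would evaluate $\la\delta_N\chi_i,\chi_j\ra$ directly from the definition of the inner product. Because $\delta_N$ is supported on $N$, the sum collapses to $\frac{|N|}{|G|}\la\chi_i\res_N,\chi_j\res_N\ra_N$. The hypothesis that $\chi_i\res_N$ and $\chi_j\res_N$ are scalar multiples of each other forces this $N$-inner product to be nonzero: each restricted character is itself nonzero, since $\chi_i(\1)=\dim\sigma_i>0$ and $\1\in N$, so neither restriction is the zero function, and two nonzero proportional class functions have nonzero inner product.

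Having shown $\la\delta_N\chi_i,\chi_j\ra\neq 0$, I would expand using the Burnside identity to conclude that $\la\chi_\rho^{\,l}\chi_i,\chi_j\ra\neq 0$ for some $l\geq 0$. This means $[\rho^{\otimes l}\otimes\sigma_i:\sigma_j]\geq 1$, so by Lemma \ref{lem:walks} there is a walk of length $l$ from $v_i$ to $v_j$ in $\Ga$.

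The only real subtlety is the translation between restriction-to-$N$ and the Burnside identity: one must verify that the pull-back of $\delta_{\bar{\1}}$ really is the correct $N$-indicator (up to whatever normalisation makes the computation come out) and that its inner product against $\chi_i\cc{\chi_j}$ reduces cleanly to $\la\chi_i\res_N,\chi_j\res_N\ra_N$. Once this bookkeeping is in place, everything else is a formal echo of the faithful case.
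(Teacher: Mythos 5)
Your proposal is correct and follows exactly the route the paper intends: the paper's "proof" of this proposition is literally the instruction to replace $\delta_{\1}$ by $\delta_N$ in the faithful-case argument, and you have supplied precisely the bookkeeping that instruction leaves implicit (pulling back Burnside's identity from $G/N$, reducing $\la\delta_N\chi_i,\chi_j\ra$ to $\tfrac{|N|}{|G|}\la\chi_i\res_N,\chi_j\res_N\ra_N$, and noting that proportional nonzero class functions have nonzero inner product). No gaps; if anything your write-up is more complete than the paper's.
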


The converse of the above proposition is also true, though we will require Clifford's Theorem \cite[Theorem 1]{clifford} to prove this.

To state Clifford's Theorem, let $\mathcal{G}$ be a finite group, $\mathcal{N}$ a normal subgroup of $\mathcal{G}$, and $\mathcal{S}_1,\ldots,\mathcal{S}_m$ the non-isomorphic irreducible representations of $\mathcal{N}$ over $\C$. There is a $\mathcal{G}$-action on $\{\mathcal{S}_1,\ldots,\mathcal{S}_m\}$ given by $(g\cdot \mathcal{S}_i )(n)=\mathcal{S}_i(g^{-1}ng)$.
\vspace{4mm}
\begin{proposition}[Clifford's Theorem]
Let $\mathcal{R}$ a be a finite-dimensional irreducible representation of $\mathcal{G}$ over $\C$. Then there is an orbit $\{\mathcal{S}_i\mid i\in I\}$ of the $\mathcal{G}$-action on $\{\mathcal{S}_1,\ldots,\mathcal{S}_m\}$ and a natural number $t$ such that 
\[
\mathcal{R} \res_\mathcal{N} \, \cong \left( \bigoplus_{i \in I} \mathcal{S}_i \right)^{\oplus t}.
\]
\end{proposition}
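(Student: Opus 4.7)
The plan is to analyze how $\mathcal{G}$ acts on the $\mathcal{N}$-isotypic decomposition of $\mathcal{R}\res_\mathcal{N}$ and exploit irreducibility of $\mathcal{R}$.

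First, I would decompose $\mathcal{R}\res_\mathcal{N}$ into its $\mathcal{N}$-isotypic components, writing $\mathcal{R}\res_\mathcal{N} = \bigoplus_{k=1}^m \mathcal{R}_k$ where $\mathcal{R}_k$ is the $\mathcal{S}_k$-isotypic component (possibly zero). The key observation to establish is that for each $g \in \mathcal{G}$, the linear map $\mathcal{R}(g): \mathcal{R} \to \mathcal{R}$ carries $\mathcal{R}_k$ onto $\mathcal{R}_{k'}$, where $\mathcal{S}_{k'} = g\cdot\mathcal{S}_k$. To see this, take a vector $v \in \mathcal{R}_k$ and compute, for any $n \in \mathcal{N}$,
\[
\mathcal{R}(n)\mathcal{R}(g)v = \mathcal{R}(g)\mathcal{R}(g^{-1}ng)v,
\]
using that $g^{-1}ng \in \mathcal{N}$ since $\mathcal{N}$ is normal. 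Since $v$ transforms under $\mathcal{N}$ through copies of $\mathcal{S}_k$, the right-hand side shows that $\mathcal{R}(g)v$ transforms under $\mathcal{N}$ through copies of the representation $n \mapsto \mathcal{S}_k(g^{-1}ng) = (g\cdot\mathcal{S}_k)(n)$. Hence $\mathcal{R}(g)\mathcal{R}_k \subseteq \mathcal{R}_{k'}$, and applying the same argument to $g^{-1}$ gives equality.

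Next, I would fix an orbit $\{\mathcal{S}_i \mid i \in I\}$ of the $\mathcal{G}$-action with some $\mathcal{R}_i \neq 0$ and consider the subspace $W = \bigoplus_{i\in I} \mathcal{R}_i$. By the previous step, $W$ is stable under every $\mathcal{R}(g)$, so it is a $\mathcal{G}$-subrepresentation of $\mathcal{R}$. It is non-zero by choice of orbit, so by irreducibility of $\mathcal{R}$ we must have $W = \mathcal{R}$. In particular, only one orbit of isotypic components appears.

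Finally, to extract the common multiplicity $t$, I would use that the $\mathcal{G}$-action permutes $\{\mathcal{R}_i \mid i \in I\}$ transitively via the isomorphisms $\mathcal{R}(g)\colon \mathcal{R}_i \to \mathcal{R}_{g\cdot i}$ from the first step. Transitivity of the orbit then guarantees that all $\mathcal{R}_i$ for $i \in I$ have the same dimension, so $\mathcal{R}_i \cong \mathcal{S}_i^{\oplus t}$ for a common $t = \dim \mathcal{R}_i/\dim\mathcal{S}_i$, and summing gives the claimed decomposition. The main technical point is the first step, verifying that $\mathcal{R}(g)$ does exchange the isotypic components according to the $\mathcal{G}$-action on irreducibles of $\mathcal{N}$; the normality of $\mathcal{N}$ is used precisely there, and once this is set up, irreducibility and orbit transitivity finish the argument cleanly.
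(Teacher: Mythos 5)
The paper does not actually prove this statement: it is quoted as a known result with a citation to Clifford's original article, and is then applied as a black box in Proposition \ref{prop:walks-characters-restricted}. So there is no in-paper argument to compare against; what you have written is a self-contained proof, and it is the standard (essentially Clifford's own) one. Your argument is correct: the computation $\mathcal{R}(n)\mathcal{R}(g)v=\mathcal{R}(g)\mathcal{R}(g^{-1}ng)v$ does show that $\mathcal{R}(g)$ maps the $\mathcal{S}_k$-isotypic component onto the $(g\cdot\mathcal{S}_k)$-isotypic component (matching the paper's convention $(g\cdot\mathcal{S}_k)(n)=\mathcal{S}_k(g^{-1}ng)$); the sum of the components over a single orbit is then a nonzero $\mathcal{G}$-subrepresentation, hence all of $\mathcal{R}$; and transitivity forces every component in that orbit to be nonzero of the same dimension, giving the common multiplicity $t$ since conjugate irreducibles of $\mathcal{N}$ have equal dimension. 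Two small points you rely on implicitly and could flag: complete reducibility of $\mathcal{R}\res_\mathcal{N}$ over $\C$ (Maschke), which is what makes the isotypic decomposition available, and the fact that $\mathcal{R}(g)$ restricted to an isotypic component is an $\mathcal{N}$-equivariant isomorphism onto its image after twisting, which is what lets you transport multiplicities. Neither is a gap in substance.
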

\vspace{4mm}
\begin{proposition}\label{prop:walks-characters-restricted}
If there is a walk from $v_i$ to $v_j$ then $\chi_i\res_{N}$ and $\chi_j\res_{N}$ are multiples of each other.
\end{proposition}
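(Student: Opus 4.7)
My plan is to combine Lemma \ref{lem:walks} (which translates a walk from $v_i$ to $v_j$ into a tensor-product multiplicity statement) with Clifford's Theorem applied to $\sigma_i$ and $\sigma_j$.

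First I would invoke Lemma \ref{lem:walks} to obtain an $L\geq 0$ with $[\rho^{\otimes L}\otimes\sigma_i:\sigma_j]\geq 1$; equivalently, $\sigma_j$ is a constituent of $\rho^{\otimes L}\otimes\sigma_i$. The crucial observation is that $N=\ker\rho$ acts trivially on $\rho$, hence trivially on $\rho^{\otimes L}$, so
\[
(\rho^{\otimes L}\otimes\sigma_i)\!\res_N \;\cong\; (\sigma_i\res_N)^{\oplus (\dim\rho)^L}.
\]
Since $\sigma_j$ is a subrepresentation of $\rho^{\otimes L}\otimes\sigma_i$ (complete reducibility in characteristic zero), $\sigma_j\res_N$ is a subrepresentation of $(\sigma_i\res_N)^{\oplus (\dim\rho)^L}$. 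In particular, every irreducible $N$-constituent of $\sigma_j\res_N$ is already a constituent of $\sigma_i\res_N$.

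Then I would apply Clifford's Theorem to each of $\sigma_i$ and $\sigma_j$ (with $\mathcal{G}=G$ and $\mathcal{N}=N$): there exist $G$-orbits $I_i$ and $I_j$ on the set of irreducible $N$-representations and positive integers $t_i,t_j$ with
\[
\sigma_i\res_N \cong \Bigl(\bigoplus_{k\in I_i}\mathcal{S}_k\Bigr)^{\oplus t_i}, \qquad \sigma_j\res_N \cong \Bigl(\bigoplus_{k\in I_j}\mathcal{S}_k\Bigr)^{\oplus t_j}.
\]
The containment from the previous paragraph gives $I_j\subseteq I_i$, and since two $G$-orbits are either equal or disjoint, this forces $I_i=I_j$. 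Consequently $t_j\cdot\sigma_i\res_N\cong t_i\cdot\sigma_j\res_N$ as $N$-representations, and taking characters yields $t_j\,\chi_i\res_N=t_i\,\chi_j\res_N$, which is exactly the required statement that $\chi_i\res_N$ and $\chi_j\res_N$ are multiples of each other.

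The only genuinely non-routine step is recognising that the inclusion of $N$-constituents forces equality of the underlying Clifford orbits; once Clifford's Theorem is in hand, the argument is just bookkeeping, and I do not anticipate a serious obstacle beyond being careful about multiplicities.
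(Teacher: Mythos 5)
Your proposal is correct and follows essentially the same route as the paper: Lemma \ref{lem:walks} to produce $\sigma_j$ inside $\rho^{\otimes L}\otimes\sigma_i$, the observation that $N=\ker\rho$ acts trivially on $\rho^{\otimes L}$ (the paper phrases this as $\chi_\rho^L\res_N$ being a multiple of the trivial character), and Clifford's Theorem together with the fact that two $G$-orbits are equal or disjoint. The only cosmetic difference is that you argue with subrepresentations where the paper uses inner products of restricted characters.
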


\begin{proof}
Let $\{\tau_1,\ldots,\tau_m\}$ be the non-isomorphic irreducible representations of $N$ over $\C$. By Clifford's Theorem, we have
\[
\sigma_i\res_N\cong\lb\bigoplus_{\alpha\in I_1}\tau_\alpha\rb^{\oplus t_1} \quad\textnormal{and}\quad \sigma_j\res_N\cong\lb\bigoplus_{\beta\in I_2}\tau_\beta\rb^{\oplus t_2}
\]
for some orbits $\{\tau_\alpha\mid \alpha\in I_1\}$ and $\{\tau_\beta\mid \beta\in I_2\}$ of the $G$-action on $\{\tau_1,\ldots,\tau_m\}$ and natural numbers $t_1,t_2$. These orbits are either disjoint, or $I_1=I_2$.

If there is a walk from $v_i$ to $v_j$ then for some $L\geq 0$ there is a subrepresentation of $\rho^{\ot L}\ot \sigma_i$ isomorphic to $\sigma_j$, by Lemma \ref{lem:walks}. Hence there is a subrepresentation of $(\rho^{\ot L}\ot \sigma_i)\res_N$ isomorphic to $\sigma_j\res_N$. Writing $\la-,-\ra_N$ for the inner product on class functions of $N$, we therefore have
\begin{equation}\label{ipn}
\la (\chi_\rho^L\chi_i)\res_N,\chi_j\res_N\ra_N \geq 1.
\end{equation}
Since $\chi_\rho^L\res_N$ is a multiple of the trivial character of $N$, we have
\[
\la \chi_i\res_N,\chi_j\res_N\ra_N\geq 1,
\]
so the orbits $\{\tau_\alpha\mid \alpha\in I_1\}$ and $\{\tau_\beta\mid \beta\in I_2\}$ cannot be disjoint. Thus $I_1=I_2$, so $\chi_i\res_N$ and $\chi_j\res_N$ are multiples of each other.
\end{proof}

Propositions \ref{prop:characters-restricted-and-walks} and \ref{prop:walks-characters-restricted} combine to prove Proposition \ref{prop:connected-components}.

\begin{proof}[Proof of Proposition \ref{prop:connected-components}]
Proposition \ref{prop:characters-restricted-and-walks} gives \textit{3}$\implies $\textit{1} and Proposition \ref{prop:walks-characters-restricted} gives \textit{2}$\implies$\textit{3}. Finally, \textit{1}$\implies$\textit{2} since strongly connected components are contained in weakly connected components.
\end{proof}

Since strongly and weakly connected components of $\Ga$ coincide, we will hereafter use the term \textit{connected component} to refer to either. Omitting $\rho$ and $G$ from our notation for conciseness, let $\Gamma_1,\ldots,\Gamma_s$ be the connected components of $\Ga$, with adjacency matrices $A_1,\ldots,A_s$ and vertex sets $\mathcal{V}_1,\ldots,\mathcal{V}_s\subset\{v_1,\ldots,v_r\}$ respectively.

We now turn to the question of how many connected components $\Ga$ has. Here, we will use a result from linear algebra known as the Perron-Frobenius Theorem, originally due to Perron \cite{perron} and extended from positive to non-negative matrices by Frobenius \cite{frobenius}.
\vspace{4mm}
\begin{definition}
An $n\times n$ matrix $X$ is called \textit{irreducible} if there is no permutation matrix $P$ such that $PXP^{-1}$ is block upper-triangular; equivalently in the case that the entries of $X$ are non-negative integers, $X$ is irreducible if the corresponding quiver is strongly connected.
\end{definition}
\vspace{4mm}
\begin{proposition}[Perron-Frobenius Theorem]
Let $X$ be a real irreducible $n \times n$ matrix all of whose entries are non-negative. Let $\lambda_1,\ldots,\lambda_n$ be the eigenvalues of $X$ and let $\lambda := \max |\lambda_i|_{1\leq i \leq n}$ be the spectral radius. Then $\lambda$ is an eigenvalue of $X$, and the corresponding eigenspace $E_\lambda$ is one-dimensional. Moreover, $E_\lambda$ contains a vector all of whose components are positive, and no other eigenspace of $X$ contains such a vector.
\end{proposition}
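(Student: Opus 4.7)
The plan is to combine the Collatz-Wielandt variational characterization with the key observation that irreducibility forces $M := (I+X)^{n-1}$ to be entry-wise positive. I would first establish this preliminary fact by expanding $M$ via the binomial theorem and recalling that the $(i,j)$-entry of $X^k$ counts walks of length $k$ from vertex $i$ to vertex $j$ in the associated quiver; strong connectivity on $n$ vertices guarantees at least one such walk of length $\leq n-1$, contributing a positive term to the $(i,j)$-entry of $M$.

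Next, for existence of a positive eigenvector, I would define the Collatz-Wielandt function $f(x) := \min_{i : x_i > 0}(Xx)_i/x_i$ on non-zero non-negative $x$, noting it is homogeneous of degree zero. Setting $g(x) := f(Mx)$ on the unit simplex $\Delta$, I observe that $Mx$ lies in the strictly positive orthant, so $g$ is continuous and attains its supremum $\lambda^*$ at some $x^*$. A short computation shows $X(Mx) = M(Xx) \geq f(x)\cdot Mx$ entry-wise, giving $f(x)\leq g(x)$ and hence $\lambda^* = \sup_\Delta f$. Writing $v := Mx^* > 0$, I have $Xv \geq \lambda^* v$; if equality failed, then $Xv - \lambda^* v$ would be non-zero and non-negative, and applying $M$ would yield $X(Mv) > \lambda^* Mv$ strictly in every coordinate, contradicting the maximality of $\lambda^*$. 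Thus $v$ is a positive $\lambda^*$-eigenvector. To identify $\lambda^*$ as the spectral radius, I would note that for any eigenvalue $\mu$ with eigenvector $w \in \C^n$, the triangle inequality gives $|\mu||w| \leq X|w|$ coordinate-wise, so $|\mu| \leq f(|w|) \leq \lambda^*$.

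For the uniqueness statements, I would first observe that every non-negative $\lambda^*$-eigenvector $w$ is automatically strictly positive, because $Mw = (1+\lambda^*)^{n-1} w$ and the left side is strictly positive. Then if $u$ is any real $\lambda^*$-eigenvector, choosing $c \in \mathbb{R}$ so that $v - cu \geq 0$ has at least one zero entry forces $v - cu = 0$, showing $u$ is a scalar multiple of $v$; complex eigenvectors decompose into real and imaginary parts. Finally, to exclude a strictly positive eigenvector $y$ for some other eigenvalue $\mu$, I would apply the existence argument to $X^T$ (also non-negative and irreducible) to obtain a positive left eigenvector $u$ with $u^T X = \lambda^* u^T$; then $\mu\, u^T y = u^T X y = \lambda^* u^T y$, and positivity of $u^T y$ forces $\mu = \lambda^*$.

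The main obstacle will be the existence step: the Collatz-Wielandt function $f$ is only upper semi-continuous on the full simplex, so compactness alone does not produce a maximizer. Pushing by the strictly positive matrix $M$ first is the key trick — it moves the problem into the positive orthant where $f$ is continuous, simultaneously ensuring attainment of the supremum and forcing the resulting eigenvector to be strictly positive.
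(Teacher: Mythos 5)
The paper does not prove this statement at all: it quotes the Perron--Frobenius theorem as a classical result, citing the original papers of Perron and Frobenius, and immediately moves on to its Corollary \ref{cor:perron-frobenius}. Your proposal therefore supplies something the paper deliberately omits, namely a self-contained proof, and what you give is essentially Wielandt's classical argument via the Collatz--Wielandt function. I find it correct. The key steps all check out: $(I+X)^{n-1}>0$ entrywise by counting walks (the $k=0$ term handles the diagonal); $Xx-f(x)x\geq 0$ because $f(x)\geq 0$ takes care of the coordinates where $x_i=0$, so applying $M$ legitimately yields $f(Mx)\geq f(x)$; the strict-inequality contradiction forces $Xv=\lambda^* v$; the triangle-inequality bound identifies $\lambda^*$ with the spectral radius; the positivity of every non-negative $\lambda^*$-eigenvector plus the sliding argument with $v-cu$ gives one-dimensionality; and the left-eigenvector pairing excludes positive eigenvectors for other eigenvalues (using, implicitly but correctly, that $X^T$ is irreducible and has the same spectral radius). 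Two small remarks. First, your closing comment slightly misdiagnoses the difficulty: $f$ is upper semi-continuous on the simplex, and upper semi-continuous functions do attain their suprema on compact sets; the genuine issue is that a maximizer on the boundary does not obviously produce a strictly positive eigenvector, which is exactly what composing with $M$ repairs, so your fix is the right one even if the stated reason is off. Second, in the uniqueness step you should say explicitly that you may replace $u$ by $-u$ so that $u$ has a positive entry before taking $c=\min\{v_i/u_i : u_i>0\}$. Neither point affects the validity of the argument.
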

\vspace{4mm}
\begin{corollary}\label{cor:perron-frobenius}
If a strongly connected quiver $Q$ is $k$-weightable for some positive integer $k$, then the spectral radius of $A(Q)$ is $k$, its $k$-eigenspace is one-dimensional, and $Q$ is not $k'$-weightable for any $k'\neq k$.
\end{corollary}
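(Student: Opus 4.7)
The plan is to deduce this as a direct consequence of the Perron-Frobenius Theorem just stated. Since $Q$ is strongly connected, the definition of irreducibility immediately gives that $A(Q)$ is an irreducible non-negative real matrix, so Perron-Frobenius applies to it. Let $\lambda$ denote its spectral radius; then $\lambda$ is an eigenvalue of $A(Q)$, the eigenspace $E_\lambda$ is one-dimensional, and $E_\lambda$ is the unique eigenspace containing a vector with all positive entries.

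Next I would use the $k$-weightability hypothesis: it supplies a $k$-eigenvector of $A(Q)$ whose components are positive integers, in particular positive. By the uniqueness clause of Perron-Frobenius, this vector must lie in $E_\lambda$, forcing $k=\lambda$. This immediately gives the first two conclusions: the spectral radius equals $k$, and the $k$-eigenspace $E_k=E_\lambda$ is one-dimensional.

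For the final conclusion, suppose for contradiction that $Q$ were also $k'$-weightable for some positive integer $k'\neq k$. Then $A(Q)$ would have a second eigenspace, namely $E_{k'}$, containing a positive vector. This again contradicts the uniqueness portion of Perron-Frobenius, so no such $k'$ can exist. There is no real obstacle here; the entire corollary is essentially a bookkeeping translation from the language of matrices to the language of quivers, with the only substantive input being the Perron-Frobenius Theorem itself.
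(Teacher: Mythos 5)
Your proposal is correct and is exactly the argument the paper intends: the paper states this corollary immediately after the Perron--Frobenius Theorem with no separate proof, treating it as the direct translation you describe (strong connectivity gives irreducibility, the positive integer weight vector must lie in the spectral-radius eigenspace by the uniqueness clause, and a second weightability would produce a positive vector in a different eigenspace). Nothing is missing.
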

\vspace{4mm}
\begin{proposition}\label{prop:number-of-components}
The number of connected components of $\Ga$ is equal to the number of conjugacy classes of $G$ contained in $N$.
\end{proposition}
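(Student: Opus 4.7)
The plan is to use Proposition \ref{prop:connected-components} to identify connected components with equivalence classes of irreducible characters of $G$ under the relation ``restrictions to $N$ are proportional,'' then to use Clifford's Theorem to re-express these equivalence classes in terms of the $G$-action on $\textnormal{Irr}(N)$, and finally to translate this count into a count of conjugacy classes of $G$ contained in $N$.

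First I would combine Proposition \ref{prop:connected-components} with Clifford's Theorem. By Clifford, each restriction $\sigma_i\res_N$ is a positive-integer multiple of the sum of the representations in a single $G$-orbit on $\textnormal{Irr}(N)$; call this orbit $\mathcal{O}(\sigma_i)$. Two restrictions $\chi_i\res_N$ and $\chi_j\res_N$ are proportional precisely when $\mathcal{O}(\sigma_i)=\mathcal{O}(\sigma_j)$, so by Proposition \ref{prop:connected-components} the connected components of $\Ga$ biject with the set of orbits of the form $\mathcal{O}(\sigma_i)$ inside $\textnormal{Irr}(N)/G$. Moreover every $G$-orbit on $\textnormal{Irr}(N)$ is of this form: given $\tau\in\textnormal{Irr}(N)$, Frobenius reciprocity provides an irreducible $\sigma_i$ of $G$ appearing in $\textnormal{Ind}_N^G\tau$, and then $\tau\in\mathcal{O}(\sigma_i)$. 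Hence the number of connected components of $\Ga$ equals the number of $G$-orbits on $\textnormal{Irr}(N)$.

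Next I would invoke Brauer's permutation lemma: the $G$-action on $\textnormal{Irr}(N)$ (by character twisting) and the $G$-action on the set of conjugacy classes of $N$ (by conjugation) have the same number of orbits. This is because the character table of $N$ intertwines the two actions and is invertible, so the corresponding permutation representations of $G$ on $\textnormal{Irr}(N)$ and on $\textnormal{Conj}(N)$ are isomorphic and in particular have equal numbers of orbits.

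Finally I would identify $G$-orbits on $\textnormal{Conj}(N)$ with conjugacy classes of $G$ contained in $N$. Any $G$-conjugacy class $K\subseteq N$ is a union of $N$-conjugacy classes (since $N$-conjugacy refines $G$-conjugacy on $N$), and $G$ transitively permutes these $N$-classes; conversely, the union of any $G$-orbit of $N$-conjugacy classes is closed under $G$-conjugation and consists of mutually $G$-conjugate elements, so it is a single $G$-conjugacy class contained in $N$. Composing the three bijections yields the stated equality. The step that will take the most care is Brauer's permutation lemma, which is not recalled earlier in the paper and needs either a citation or a brief self-contained argument via the invertibility of the character table.
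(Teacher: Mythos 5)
Your proof is correct, but it takes a genuinely different route from the paper. The paper argues spectrally: by Proposition \ref{prop:eigenvectors-of-A} the columns of the character table are eigenvectors of $\A$ with eigenvalues $\chi_\rho(g)$, and $\chi_\rho(g)=\dim\rho$ exactly when $g\in N$, so the number of conjugacy classes inside $N$ is the multiplicity of the eigenvalue $\dim\rho$; then, since each connected component is $(\dim\rho)$-weightable via the dimension vector, Corollary \ref{cor:perron-frobenius} (Perron--Frobenius) forces each diagonal block of $\A$ to contribute exactly one dimension to that eigenspace, and the count follows from block-diagonality. Your argument instead builds an explicit chain of bijections: components $\leftrightarrow$ $G$-orbits on $\textnormal{Irr}(N)$ (via Proposition \ref{prop:connected-components}, Clifford's Theorem, and Frobenius reciprocity for surjectivity) $\leftrightarrow$ $G$-orbits on the conjugacy classes of $N$ (via Brauer's permutation lemma) $\leftrightarrow$ $G$-classes contained in $N$. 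Each step checks out: proportionality of $\chi_i\res_N$ and $\chi_j\res_N$ does coincide with equality of the Clifford orbits by linear independence of irreducible characters of $N$, and your intertwining argument for Brauer's lemma is the standard one. What the paper's approach buys is economy --- it reuses machinery already established (Propositions \ref{prop:eigenvectors-of-A} and \ref{prop:connected-components}, Corollary \ref{cor:perron-frobenius}) and needs no new input. What yours buys is more information: it identifies \emph{which} component corresponds to \emph{which} orbit on $\textnormal{Irr}(N)$, not merely the count; the cost, as you note, is that Brauer's permutation lemma is external to the paper and would need a citation or the short self-contained proof you sketch.
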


\begin{proof}
By Proposition \ref{prop:eigenvectors-of-A}, the number of conjugacy classes of $G$ contained in $N$ is the multiplicity of $\dim\rho$ as an eigenvalue of $A_\rho(G)$.

Each connected component $\Gamma_i$ of $\Ga$ is $(\dim\rho)$-weightable by restricting the weighting $v_j\mapsto \dim\sigma_j$ to $\mathcal{V}_i$. Thus by Corollary \ref{cor:perron-frobenius}, the $(\dim\rho)$-eigenspace of $A_i$ is one-dimensional. Since $\A$ is a block diagonal matrix with blocks $A_1,\ldots,A_s$, the multiplicity of its $(\dim\rho)$-eigenspace is the sum of the multiplicities of the $(\dim\rho)$-eigenspaces of $A_1,\ldots,A_s$, which is the number of connected components of $\Ga$.
\end{proof}

\section{The connected components} Call the connected component containing $v_1$ the \textit{principal component} of $\Gamma_\rho(G)$. The representation $\rho$ descends to a representation $\rho_0:G/N\to\GL(V)$ of $G/N$, and it is clear that the principal component is just the McKay quiver $\Gamma_{\rho_0}(G/N)$.

The other connected components are harder to describe in general. In certain cases, this can be resolved using the following general property of McKay quivers.
\vspace{4mm}
\begin{lemma}\label{lem:action-of-dual-group}
The dual group $G^\vee$ of $G$ acts on $\Gamma_\rho(G)$ by quiver automorphisms, via an action that is simply transitive on the vertices corresponding to one-dimensional representations.
\end{lemma}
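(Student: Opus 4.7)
The plan is to define the action of $G^\vee$ explicitly on irreducible representations by tensor product, and then verify the two required properties (quiver automorphism; simple transitivity on one-dimensional vertices) more or less by direct character computation. Given a character $\psi\in G^\vee$ (i.e.\ a one-dimensional representation of $G$) and an irreducible $\sigma_i$, the tensor product $\psi\ot\sigma_i$ is again an irreducible of the same dimension: this is immediate from $\la \chi_\psi\chi_i,\chi_\psi\chi_i\ra=\la\chi_i,\chi_i\ra=1$, using that $|\chi_\psi(g)|=1$ for all $g$ since $\psi(g)$ is a root of unity. So $\psi$ permutes the isomorphism classes of irreducibles of $G$, hence the vertices $v_1,\dots,v_r$ of $\Ga$. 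Associativity of tensor product and the fact that the trivial character acts as the identity give a genuine left $G^\vee$-action on the vertex set.

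Next, I would check that this permutation is a quiver automorphism by showing that the edge multiplicities are preserved. With $a_{ij}=\la\chi_\rho\chi_i,\chi_j\ra$, the number of arrows from $\psi\cdot v_i$ to $\psi\cdot v_j$ is $\la\chi_\rho\chi_\psi\chi_i,\chi_\psi\chi_j\ra$, and
\[
\la\chi_\rho\chi_\psi\chi_i,\chi_\psi\chi_j\ra=\frac{1}{|G|}\sum_{g\in G}\chi_\rho(g)\chi_i(g)\cc{\chi_j(g)}\,|\chi_\psi(g)|^2=\la\chi_\rho\chi_i,\chi_j\ra=a_{ij},
\]
again using $|\chi_\psi(g)|^2=1$. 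So tensoring with $\psi$ sends the arrows from $v_i$ to $v_j$ bijectively to the arrows from $\psi\cdot v_i$ to $\psi\cdot v_j$, giving an automorphism of $\Ga$.

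Finally, I would observe that the vertices corresponding to one-dimensional representations are precisely the elements of $G^\vee$ itself, and the action of $G^\vee$ on this subset by tensoring is just multiplication in the abelian group $G^\vee$. The regular action of any group on itself is simply transitive, which gives the second assertion.

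There is no real obstacle here beyond the character-theoretic bookkeeping; the only point that is easy to overlook is the unitarity $|\chi_\psi(g)|=1$, which is what makes the tensor-product action preserve both irreducibility and edge multiplicities.
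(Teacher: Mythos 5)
Your proof is correct and follows essentially the same route as the paper: the action is pointwise multiplication of characters (equivalently, tensoring with a one-dimensional representation), edge multiplicities are preserved because $\la\chi_\rho(\chi_\psi\chi_i),\chi_\psi\chi_j\ra=\la\chi_\rho\chi_i,\chi_j\ra$ via $|\chi_\psi(g)|^2=1$, and simple transitivity on the one-dimensional vertices is just the regular action of $G^\vee$ on itself. You additionally spell out the check that tensoring with a character preserves irreducibility, which the paper leaves implicit.
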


\begin{proof}
The dual group $G^\vee$ acts on the irreducible characters of $G$ by pointwise multiplication, and this action is simply transitive on $G^\vee$ since it is a group. Because the vertices of $\Ga$ are in correspondence with the irreducible characters of G, this defines an action of $G^\vee$ on the vertices of $\Ga$ that is simply transitive on the vertices corresponding to one-dimensional representations. This action is by quiver automorphisms since for $\chi\in G^\vee$ and $i,j\in\{1,\ldots,s\}$, we have
\[
\la \chi_\rho\chi_i,\chi_j \ra = \la \chi_\rho(\chi\chi_i),(\chi\chi_j) \ra.\qedhere
\]
\end{proof}
\vspace{4mm}
\begin{proposition}\label{prop:components-with-1d-rep}
Any connected component of $\Ga$ containing a vertex corresponding to a one-dimensional representation is isomorphic to the principal component. In particular, if $G$ is abelian then all the connected components of $\Ga$ are isomorphic.
\end{proposition}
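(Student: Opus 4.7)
The plan is to apply Lemma \ref{lem:action-of-dual-group} directly: since $G^\vee$ acts on $\Ga$ by quiver automorphisms and acts simply transitively on the set of vertices corresponding to one-dimensional representations, we can transport the principal component onto any other component containing such a vertex.

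First I would fix a connected component $C$ of $\Ga$ that contains a vertex $v$ corresponding to a one-dimensional irreducible representation. The trivial representation $\sigma_1$ is also one-dimensional, so by the simple transitivity clause of Lemma \ref{lem:action-of-dual-group} there exists $\chi\in G^\vee$ with $\chi\cdot v_1=v$. The action of $\chi$ on $\Ga$ is by a quiver automorphism, and any quiver automorphism permutes the connected components. Hence $\chi$ sends the principal component (the component containing $v_1$) to the component containing $v$, which by uniqueness must be $C$. Restricting the automorphism induced by $\chi$ to the principal component yields the desired isomorphism onto $C$.

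For the ``in particular'' clause, if $G$ is abelian then every irreducible representation of $G$ is one-dimensional, so every vertex of $\Ga$ is a one-dimensional vertex. Thus every connected component contains at least one vertex corresponding to a one-dimensional representation, and by the first part of the proposition every component is isomorphic to the principal component.

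There is essentially no technical obstacle here, since the substantive work was already carried out in establishing Lemma \ref{lem:action-of-dual-group}; the only point requiring a word of care is the observation that a quiver automorphism necessarily respects the partition into connected components, which is immediate since automorphisms preserve (directed or undirected) walks.
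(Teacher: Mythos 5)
Your proof is correct and follows exactly the argument the paper intends: the paper's own proof is simply ``Immediate from Lemma \ref{lem:action-of-dual-group},'' and your write-up spells out the same mechanism (simple transitivity gives $\chi$ with $\chi\cdot v_1 = v$, and the resulting quiver automorphism carries the principal component onto the component containing $v$). No gaps; you have merely made explicit what the paper leaves to the reader.
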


\begin{proof}
Immediate from Lemma \ref{lem:action-of-dual-group}.
\end{proof}

Another notable case in which the other connected components are the same as the principal component is when $G$ is a direct product of $N$ and a complementary normal subgroup $H$.
\vspace{4mm}
\begin{proposition}\label{prop:direct-product}
Suppose that $G=N\times H$ for a complementary normal subgroup $H$. Then all the connected components of $\Ga$ are isomorphic.
\end{proposition}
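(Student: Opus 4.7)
The plan is to exploit the fact that irreducible representations of a direct product $N\times H$ are exactly external tensor products $\tau\boxtimes\pi$ with $\tau\in\operatorname{Irr}(N)$ and $\pi\in\operatorname{Irr}(H)$, and that $\rho$, being trivial on $N$, factors as the external tensor product $\1_N\boxtimes \rho_0$, where $\rho_0:H\to\GL(V)$ is the representation of $H\cong G/N$ induced by $\rho$.

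First I would index the irreducible representations of $G$ as $\sigma_{(\tau,\pi)}=\tau\boxtimes \pi$ and compute, for each such pair,
\[
\rho\otimes\sigma_{(\tau,\pi)}\cong(\1_N\boxtimes \rho_0)\otimes(\tau\boxtimes\pi)\cong \tau\boxtimes(\rho_0\otimes\pi).
\]
Decomposing $\rho_0\otimes\pi\cong\bigoplus_{\pi'\in\operatorname{Irr}(H)}(\pi')^{\oplus b_{\pi,\pi'}}$ using the McKay matrix $A_{\rho_0}(H)=(b_{\pi,\pi'})$, we obtain
\[
\rho\otimes\sigma_{(\tau,\pi)}\cong\bigoplus_{\pi'\in\operatorname{Irr}(H)}\sigma_{(\tau,\pi')}^{\oplus b_{\pi,\pi'}}.
\]
This identity says precisely that the multiplicities appearing in the McKay matrix $\A$ only connect pairs $(\tau,\pi)$ and $(\tau,\pi')$ sharing the \emph{same} first coordinate $\tau$; hence after a suitable reordering of vertices, $\A$ is block diagonal with $|\operatorname{Irr}(N)|$ blocks, each equal to $A_{\rho_0}(H)$.

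Next I would conclude that the subquiver $\Gamma_\tau$ spanned by the vertices $\{(\tau,\pi):\pi\in\operatorname{Irr}(H)\}$ is isomorphic to $\Gamma_{\rho_0}(H)$ via $(\tau,\pi)\mapsto\pi$, and that each $\Gamma_\tau$ is connected because the principal component $\Gamma_{\mathbf 1_N}$ is by the remark at the start of the section isomorphic to $\Gamma_{\rho_0}(G/N)\cong\Gamma_{\rho_0}(H)$, which is connected as $\rho_0$ is faithful. Finally, to verify that the $\Gamma_\tau$ are exactly the connected components (rather than possibly breaking further), I would invoke Proposition \ref{prop:number-of-components}: the conjugacy classes of $G=N\times H$ contained in $N$ are exactly the conjugacy classes of $N$, so their number equals $|\operatorname{Irr}(N)|$, matching the number of subquivers $\Gamma_\tau$. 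Thus each $\Gamma_\tau$ is a connected component, and all components are isomorphic to $\Gamma_{\rho_0}(H)$.

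The only mildly delicate step is the last one: ensuring that the block decomposition really realises the components and doesn't fragment further. Once Proposition \ref{prop:number-of-components} is in hand this is immediate, so there is no real obstacle; the heart of the argument is just the identity $\rho\otimes(\tau\boxtimes\pi)\cong\tau\boxtimes(\rho_0\otimes\pi)$.
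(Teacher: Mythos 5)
Your proof is correct, and it reaches the same destination as the paper by a noticeably different arrangement of the same underlying fact. The paper first pins down the vertex sets of the connected components using Proposition \ref{prop:connected-components} (so it leans on the Clifford-theoretic machinery of Section 3): writing the irreducible characters of $G=N\times H$ as products $\chi_i\chi_j$ with $\sigma_i$ trivial on $N$ and $\sigma_j$ trivial on $H$, the components are the sets $\{\chi_i\chi_j\mid i\in I\}$ for fixed $j$, and multiplication by $\chi_j$ is then checked to be a quiver isomorphism from the principal component via the identity $\la \chi_{i_1}\chi_\rho,\chi_{i_2}\ra =\la\chi_{i_1}\chi_j\chi_{\rho},\chi_{i_2}\chi_j \ra$. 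You instead compute the whole McKay matrix at once from $\rho\ot(\tau\boxtimes\pi)\cong\tau\boxtimes(\rho_0\ot\pi)$, exhibiting it as block diagonal with every block literally equal to $A_{\rho_0}(H)$, and only afterwards argue that the blocks are the components --- either because $\rho_0$ is faithful on $H\cong G/N$ so each block is strongly connected, or by matching the block count against Proposition \ref{prop:number-of-components}. (As you note, the first of these already suffices; the component count is redundant once you know each block is strongly connected and there are no edges between blocks.) Your version buys a little more transparency: it avoids invoking Proposition \ref{prop:connected-components} and makes explicit that every component is isomorphic to $\Gamma_{\rho_0}(G/N)$, i.e.\ to the principal component, which the paper obtains only implicitly through its remark at the start of Section 4. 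The paper's inner-product identity is, of course, just the character-theoretic shadow of your external tensor product decomposition, so the two arguments are ultimately powered by the same computation.
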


\begin{proof}
If $\{\sigma_i\mid i\in I\}$ and $\{\sigma_j\mid j\in J\}$ are the sets of irreducible representations of $G$ that factor through $G/N$ and $G/H$ respectively, then a complete set of irreducible characters of $G$ is given by
\[
\{\chi_i\chi_j\mid i\in I,j\in J\}.
\]
Then using Proposition \ref{prop:connected-components}, the sets of irreducible characters corresponding to connected components are of the form $\{\chi_i\chi_j\mid i\in I\}$ for $j\in J$. Fix $j\in J$. Then for $i_1,i_2\in I$, we have
\[
\la \chi_{i_1}\chi_\rho,\chi_{i_2}\ra =\la\chi_{i_1}\chi_j\chi_{\rho},\chi_{i_2}\chi_j \ra.
\]
Thus multiplication of $\{\chi_i\mid i\in I\}$ by $\chi_j$ induces a quiver automorphism from the principal component to the connected component in correspondence with the irreducible characters $\{\chi_i \chi_j\mid i\in I\}$.
\end{proof}

\section{Reduced $(\dim\rho)$-weightings} In general, the other connected components may not be the same as the principal component. In particular, we will give an example of a connected component of a McKay quiver that does not arise as a McKay quiver in its own right (Example \ref{ex:binary-dihedral}).

However, this will require some method of showing that a given quiver is not the McKay quiver of any faithful representation. We know that McKay quivers of faithful representations are strongly connected and $(\dim\rho)$-weightable -- but so are connected components of McKay quivers. We will solve this problem by considering \textit{reduced weightings}.
\vspace{4mm}
\begin{definition}
For a quiver $Q$ and a positive integer $k$, a \textit{reduced $k$-weight vector} of $Q$ is a $k$-weight vector whose components share no non-trivial common factor. The corresponding $k$-weighting is a \textit{reduced $k$-weighting}.
\end{definition}

From the Perron-Frobenius Theorem, we see that a strongly connected $k$-weightable quiver has a unique reduced $k$-weight vector. In the case of McKay quivers of faithful representations, the reduced $(\dim\rho)$-weighting is just the dimension vector $(\dim\sigma_1,\ldots,\dim\sigma_r)$. Since a strongly connected $k$-weightable quiver is not $k'$-weightable for any $k'\neq k$, we observe:
\vspace{4mm}
\begin{proposition}\label{prop:dimensions-from-quiver}
If $\rho$ is faithful then we can determine $(\dim\sigma_1,\ldots,\dim\sigma_r)$ from $\Ga$ as its unique reduced $k$-weight vector for any $k$.
\end{proposition}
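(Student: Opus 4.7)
The plan is to assemble this proposition from three ingredients already established in the excerpt: faithfulness gives strong connectivity, the dimension vector is always a $(\dim\rho)$-weighting, and Perron--Frobenius pins down uniqueness. First I would note that, by the corollary following Proposition~\ref{prop:eigenvectors-of-A}, the vector $(\dim\sigma_1,\ldots,\dim\sigma_r)$ is a $(\dim\rho)$-weight vector of $\Gamma_\rho(G)$. Thus $\Gamma_\rho(G)$ is $(\dim\rho)$-weightable, so there is at least one value of $k$ for which the question makes sense.

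Next I would invoke the fact, proved earlier via the Burnside identity, that if $\rho$ is faithful then $\Gamma_\rho(G)$ is strongly connected. Combined with $(\dim\rho)$-weightability, Corollary~\ref{cor:perron-frobenius} then tells us two things simultaneously: the only $k$ for which $\Gamma_\rho(G)$ is $k$-weightable is $k=\dim\rho$, and for this $k$ the $k$-eigenspace of $A_\rho(G)$ is one-dimensional. So the quantifier ``for any $k$'' in the statement is in fact vacuous except at $k=\dim\rho$, and at that value the reduced $k$-weight vector is uniquely determined (the positive integer vectors in a one-dimensional real eigenspace form a single ray, on which exactly one representative has components with gcd~$1$).

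The final step is to observe that $(\dim\sigma_1,\ldots,\dim\sigma_r)$ is itself already reduced: since $\sigma_1$ is the trivial representation we have $\dim\sigma_1=1$, so the components trivially share no non-trivial common factor. By the uniqueness in the previous paragraph, this dimension vector must therefore coincide with the unique reduced $(\dim\rho)$-weight vector of $\Gamma_\rho(G)$, completing the proof.

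There is no real obstacle here, as every ingredient has been prepared in the preceding sections; the only point that could be overlooked is precisely the one that makes the statement work, namely that $\dim\sigma_1=1$ forces the dimension vector to be reduced without any further normalisation. Without that observation one would only recover $(\dim\sigma_1,\ldots,\dim\sigma_r)$ up to a common scalar, and would need an additional input to fix the scale.
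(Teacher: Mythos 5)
Your proposal is correct and follows essentially the same route as the paper, which deduces the result from strong connectivity (via faithfulness), the $(\dim\rho)$-weighting by the dimension vector, and the uniqueness statement in Corollary~\ref{cor:perron-frobenius}. The only difference is that you make explicit the small point the paper leaves implicit — that $\dim\sigma_1=1$ forces the dimension vector to already be reduced — which is a worthwhile clarification but not a different argument.
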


As a corollary, if a strongly connected quiver has a reduced weighting whose weights are not the multiset of dimensions of the irreducible representations of a finite group then it is not a McKay quiver. For example, the quiver in Figure \ref{pfig} is $3$-weightable with reduced weights $2$ and $3$; since $1$ (the dimension of the trivial representation) is not one of the reduced weights, the quiver is not a McKay quiver.

\begin{figure}[h]
    \centering
    \includegraphics[width=10cm]{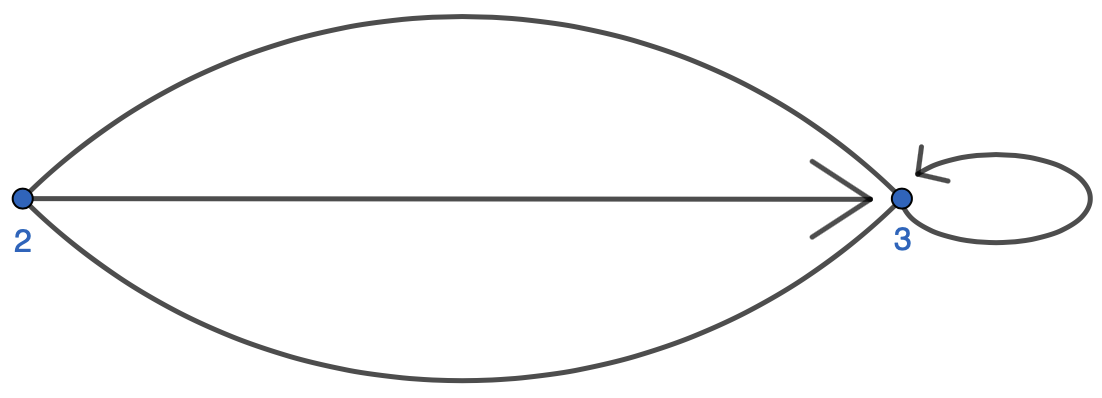}
    \caption{A $3$-weightable quiver with reduced weights $2$ and $3$. Since $\{2,3\}$ is not the multiset of dimensions of irreducible representations of any finite group, this quiver is not a McKay quiver.}
    \label{pfig}
\end{figure}

Proposition \ref{prop:dimensions-from-quiver} also allows us to determine which vertices of a McKay quiver correspond to one-dimensional representations, and by Lemma \ref{lem:action-of-dual-group}, there is a quiver automorphism that sends any such vertex to any other such vertex. Thus, if a strongly connected $k$-weightable quiver has two vertices of reduced weight $1$ and no quiver automorphism that sends one to the other, then this is not a McKay quiver; see Figure \ref{fig13} for an example.

\begin{figure}[h]
    \centering
    \includegraphics[width=10cm]{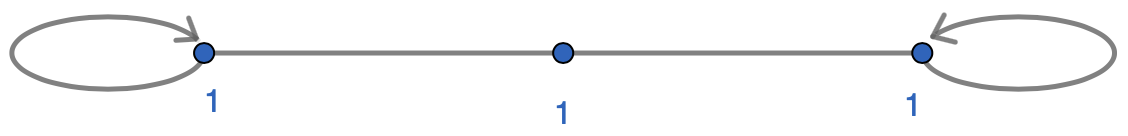}
    \caption{A $2$-weightable quiver with reduced weights $1$, $1$ and $1$. Since there is no quiver automorphism sending the centre vertex to either of the other vertices, this quiver is not a McKay quiver.}
    \label{fig13}
\end{figure}

We can now give an example of a McKay quiver with a connected component that is not itself the McKay quiver of a faithful representation.
\vspace{4mm}
\begin{example}\label{ex:binary-dihedral}
Consider the binary dihedral group of order $24$,
\[
G=\la x,y,-1\mid x^2=y^6=(xy)^2=-1 \ra,
\]
where $-1$ denotes a central element of order $2$. The irreducible characters $\chi_1,\ldots,\chi_9$ of $G$ are shown in Figure \ref{fig:char-tab-bd6}; let $\sigma_1,\ldots,\sigma_9$ be representations of $G$ with these characters. The McKay quiver $\Gamma_{\sigma_7}(G)$ is shown in Figure \ref{fig:disconnected-example-bd6}. The right-hand connected component is the $2$-weightable quiver in Figure \ref{fig13}, which is not a McKay quiver.

\begin{figure}[h]
\begin{center}
  \begin{tabular}{|c | c c c c c c c c c|}
    \hline
    class & $1$ & $-1$ & $x$ & $xy$ & $y$ & $y^2$ & $y^3$ & $y^4$ & $y^5$  \\ \hline
    \# & $1$ & $1$ & $6$ & $6$ & $2$ &$2$ & $2$ & $2$ & $2$ \\ \hline
    $\chi_{1}$ & $1$ & $1$ & $1$ & $1$ & $1$ & $1$ & $1$ & $1$ & $1$ \\
    $\chi_{2}$ & $1$ & $1$ & $1$ & $-1$ & $-1$ & $1$ & $-1$ & $1$ & $-1$  \\
    $\chi_{3}$ & $1$ & $1$ & $-1$ & $1$ & $-1$ & $1$ & $-1$ & $1$ & $-1$\\
    $\chi_{4}$ & $1$ & $1$ & $-1$ & $-1$ & $1$ & $1$ & $1$ & $1$ & $1$\\
    $\chi_{5}$ & $2$ & $-2$ & $0$ & $0$ & $\sqrt{3}$ & $1$ & $0$ & $-1$ & $-\sqrt{3}$ \\
    $\chi_{6}$ & $2$ & $-2$ & $0$ & $0$ & $-\sqrt{3}$ & $1$ & $0$ & $-1$ & $\sqrt{3}$\\
    $\chi_{7}$ & $2$ & $2$ & $0$ & $0$ & $1$ & $-1$ & $-2$ & $-1$ & $1$ \\
    $\chi_{8}$ & $2$ & $2$ & $0$ & $0$ & $-1$ & $-1$ & $2$ & $-1$ & $-1$\\
    $\chi_{9}$ & $2$ & $-2$ & $0$ & $0$ & $0$ & $-2$ & $0$ & $2$ & $0$\\
    \hline
  \end{tabular}
\end{center}
\caption{The character table of the binary dihedral group of order $24$.}
\label{fig:char-tab-bd6}
\end{figure}
\begin{figure}[h]
    \centering
    \includegraphics[width=12cm]{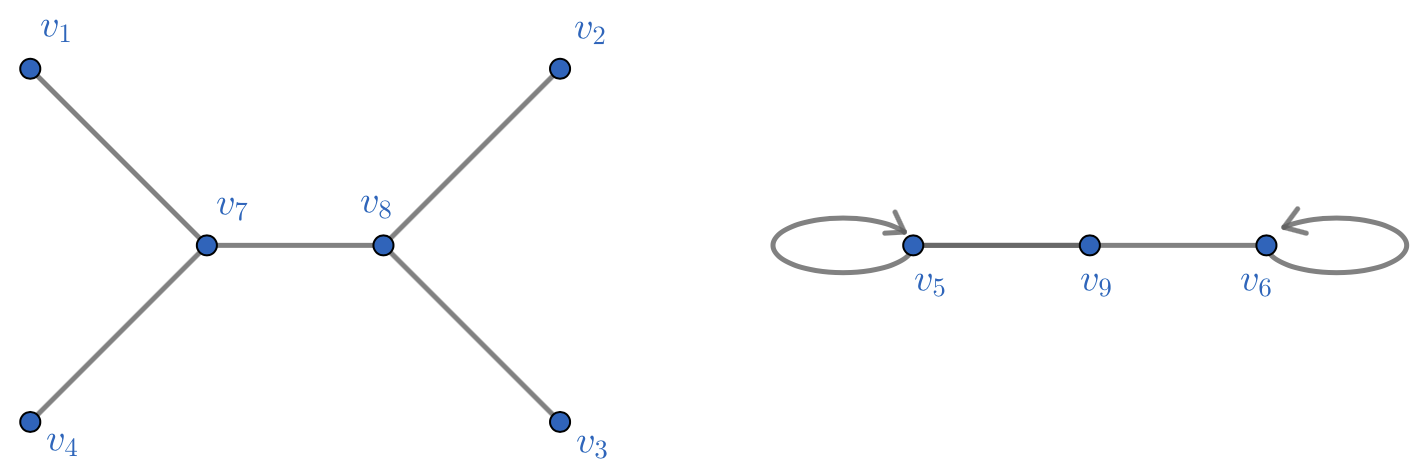}
    \caption{The disconnected McKay quiver $\Gamma_{\sigma_7}(G)$.}
    \label{fig:disconnected-example-bd6}
\end{figure}
\end{example}

Aside from allowing us to rule out various strongly connected weightable quivers as McKay quivers, Proposition \ref{prop:dimensions-from-quiver} tells us that two finite groups can only share a connected McKay quiver if their irreducible representations have the same dimensions. In fact, the converse is also true, as we see from the following observation.
\vspace{4mm}
\begin{proposition}\label{prop:regular-rep}
The McKay quiver of the regular representation of a finite group depends only on the multiset of dimensions of its irreducible representations.
\end{proposition}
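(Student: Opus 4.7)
The plan is to compute the entries of the McKay matrix $A_{\rho_{\text{reg}}}(G)$ explicitly and observe that they depend only on the dimensions of the irreducible representations. The key input is that the character of the regular representation is $\chi_{\text{reg}} = |G|\,\delta_{\mathbf{1}}$, i.e., it takes the value $|G|$ at the identity and $0$ elsewhere. This essentially collapses the inner product $\langle \chi_{\text{reg}} \chi_i, \chi_j \rangle$ to a single-term sum evaluated at $\mathbf{1}$.

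First I would write $\rho_{\text{reg}} \cong \bigoplus_k \sigma_k^{\oplus \dim \sigma_k}$, so that $\chi_{\text{reg}}(g) = |G|$ for $g = \mathbf{1}$ and $0$ otherwise. Then, using the formula $a_{ij} = \langle \chi_{\text{reg}} \chi_i, \chi_j \rangle$ from the definition of the McKay matrix, the computation
\[
a_{ij} \;=\; \frac{1}{|G|}\sum_{g\in G}\chi_{\text{reg}}(g)\chi_i(g)\overline{\chi_j(g)} \;=\; \frac{1}{|G|}\cdot|G|\cdot\chi_i(\mathbf{1})\overline{\chi_j(\mathbf{1})} \;=\; (\dim\sigma_i)(\dim\sigma_j)
\]
shows that every entry of $A_{\rho_{\text{reg}}}(G)$ is just a product of two dimensions of irreducible representations.

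Finally, to conclude the statement at the level of quivers rather than matrices, I would note that if $G_1$ and $G_2$ are two finite groups whose irreducible representations have the same multiset of dimensions, we may list their irreducibles so that $\dim\sigma_i^{(1)} = \dim\sigma_i^{(2)}$ for every $i$; under this labelling both McKay matrices have $(i,j)$-entry $(\dim\sigma_i^{(1)})(\dim\sigma_j^{(1)})$ and so are literally equal, giving an isomorphism of McKay quivers. There is no real obstacle here; the only mild subtlety is the bookkeeping to pass from equality of matrices under a chosen labelling to isomorphism of quivers, which is handled by the observation that the entries $d_i d_j$ are invariant under permutations of the dimension multiset.
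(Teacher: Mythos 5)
Your proposal is correct and is essentially the same argument as the paper's: both compute $\la\chi_\rho\chi_i,\chi_j\ra=(\dim\sigma_i)(\dim\sigma_j)$ for the regular representation (you spell out the intermediate step via $\chi_{\textnormal{reg}}=|G|\delta_{\1}$, which the paper leaves implicit) and then conclude that any dimension-preserving bijection of vertices is a quiver isomorphism.
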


\begin{proof}
Suppose that $\rho$ is the regular representation of $G$. Then for any $i,j$,
\[
x_{ij}=\la\chi_\rho\chi_i,\chi_j\ra = (\dim\sigma_i)(\dim\sigma_j).
\]

Thus any bijection between two McKay quivers of regular representations that preserves the dimension of the representation corresponding to each vertex is a quiver isomorphism.
\end{proof}

Since it is possible for two non-isomorphic groups to share the same multiset of dimensions of their irreducible representations -- for example, any two abelian groups of the same order -- Proposition \ref{prop:regular-rep} implies that non-isomorphic groups can have isomorphic McKay quivers.

Combining Proposition \ref{prop:dimensions-from-quiver} with Proposition \ref{prop:regular-rep}, we have:
\vspace{4mm}
\begin{proposition}\label{prop:isographical-groups}
Let $G_1$ and $G_2$ be finite groups. There exist faithful representations $\rho_1$ of $G_1$ and $\rho_2$ of $G_2$ such that $\Gamma_{\rho_1}(G_1)\cong\Gamma_{\rho_2}(G_2)$ if and only if the irreducible representations of $G_1$ and $G_2$ have the same dimensions (counted with multiplicity).
\end{proposition}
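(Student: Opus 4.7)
The plan is to prove the two directions separately, and both follow by directly combining the two cited propositions that immediately precede the statement.

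For the backward direction ($\Leftarrow$), suppose $G_1$ and $G_2$ have the same multiset of dimensions of irreducible representations. I would simply take $\rho_1$ and $\rho_2$ to be the regular representations of $G_1$ and $G_2$ respectively. These are automatically faithful. By Proposition \ref{prop:regular-rep}, the McKay quiver of a regular representation depends only on the multiset of dimensions of the irreducible representations of the underlying group, and any dimension-preserving bijection between vertex sets gives a quiver isomorphism. Since the multisets agree by hypothesis, we can pick such a bijection and it yields $\Gamma_{\rho_1}(G_1)\cong\Gamma_{\rho_2}(G_2)$.

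For the forward direction ($\Rightarrow$), suppose $\rho_1$ and $\rho_2$ are faithful with $\Gamma_{\rho_1}(G_1)\cong\Gamma_{\rho_2}(G_2)$. Since $\rho_i$ is faithful, $\Gamma_{\rho_i}(G_i)$ is strongly connected (by the proposition at the start of Section 3, or by Proposition \ref{prop:connected-components} applied with $N$ trivial). By Proposition \ref{prop:dimensions-from-quiver}, the multiset of dimensions of the irreducible representations of $G_i$ is precisely the multiset of entries of the unique reduced weight vector of $\Gamma_{\rho_i}(G_i)$, which is an invariant of the quiver up to isomorphism (a quiver isomorphism permutes the reduced weight vector entries). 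Since the two quivers are isomorphic, these two multisets coincide.

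There is no real obstacle here; the proposition is essentially a packaging of Propositions \ref{prop:dimensions-from-quiver} and \ref{prop:regular-rep}. The only subtlety worth flagging is that Proposition \ref{prop:dimensions-from-quiver} yields uniqueness of the reduced weight vector as a tuple indexed by the vertices, and one has to remark that the underlying multiset is therefore a quiver isomorphism invariant, but this is immediate. I would accordingly keep the written proof to two short paragraphs, one for each implication.
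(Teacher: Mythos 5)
Your proposal is correct and follows exactly the paper's own argument: the forward implication via Proposition \ref{prop:dimensions-from-quiver} (the reduced weight vector of a strongly connected McKay quiver recovers the dimension multiset, an isomorphism invariant) and the backward implication via the regular representations and Proposition \ref{prop:regular-rep}. The extra detail you supply about multiset invariance under quiver isomorphism is a harmless elaboration of what the paper leaves implicit.
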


\begin{proof}
The forward implication is Proposition \ref{prop:dimensions-from-quiver}; the backward implication follows directly from Proposition \ref{prop:regular-rep}.
\end{proof}

\section{Weightable quivers that are not connected components}

As we have seen, there are connected components of McKay quivers that are not McKay quivers in their own right. Can any strongly connected weightable quiver be a connected component of a McKay quiver? We end this paper with an example that answers this question in the negative: there are strongly connected weightable quivers that cannot be connected components of McKay quivers.

In order to show that a strongly connected quiver $Q$ is not a connected component of a McKay quiver, we will look at the characteristic polynomial of $A(Q)$, making use of the following observation.
\vspace{4mm}
\begin{proposition}
The characteristic polynomial $f_{A_i}$ of the adjacency matrix $A_i$ of a connected component $\Gamma_i$ of a McKay quiver $\Ga$ is always solvable by radicals.
\end{proposition}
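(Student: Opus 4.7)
The plan is to show that the splitting field of $f_{A_i}$ is contained in a cyclotomic extension of $\Q$, from which solvability by radicals follows immediately since cyclotomic extensions have abelian Galois group.

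First I would observe that, since $\A$ is block diagonal with blocks $A_1,\ldots,A_s$, the characteristic polynomial of $\A$ factors as $f_\A = f_{A_1}f_{A_2}\cdots f_{A_s}$. In particular, each $f_{A_i}$ is an integer polynomial that divides $f_\A$, so its roots are a subset of the eigenvalues of $\A$. By Proposition \ref{prop:eigenvectors-of-A}, those eigenvalues are precisely the values $\chi_\rho(g)$ as $g$ ranges over a set of conjugacy class representatives of $G$.

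Next I would invoke the standard fact that character values of complex representations of a finite group are algebraic integers lying in $\Q(\zeta_{|G|})$: any $g\in G$ has order dividing $|G|$, so $\rho(g)$ is diagonalisable with eigenvalues that are $|G|$-th roots of unity, and $\chi_\rho(g)$ is their sum. It follows that the splitting field of $f_{A_i}$ over $\Q$ is contained in $\Q(\zeta_{|G|})$.

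Finally, $\mathrm{Gal}(\Q(\zeta_{|G|})/\Q)\cong(\Z/|G|\Z)^\times$ is abelian, hence solvable, and any subextension inherits a solvable Galois group as a quotient. Therefore $f_{A_i}$ has solvable Galois group over $\Q$ and is solvable by radicals. There is no real obstacle here beyond citing the cyclotomic nature of character values; the argument is essentially a packaging of three standard facts (block-diagonal characteristic polynomial factorisation, Proposition \ref{prop:eigenvectors-of-A}, and the integrality/cyclotomicity of character values).
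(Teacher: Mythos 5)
Your proposal is correct and follows essentially the same route as the paper: identify the eigenvalues of $A_i$ as a subset of the eigenvalues of $\A$, recognise these as character values and hence sums of roots of unity, and conclude that the splitting field of $f_{A_i}$ sits inside a cyclotomic (hence radical, with abelian Galois group) extension of $\Q$. The only cosmetic differences are that the paper justifies the eigenvalue containment by extending eigenvectors of $A_i$ by zero rather than by factoring the block-diagonal characteristic polynomial, and concludes via ``radical extension'' rather than ``abelian Galois group''; both are equivalent packagings of the same argument.
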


\begin{proof}
The eigenvalues of $A_\rho(G)$ are sums of roots of unity, since they are character values by Proposition \ref{prop:eigenvectors-of-A}. The eigenvalues of $A_i$ are a subset of the eigenvalues of $A_\rho(G)$; this is because any eigenvector of $A_i$ can be extended by zero to an eigenvector of $A_\rho(G)$.

Thus the splitting field of $f_{A_i}$ is contained in a field of the form $\Q(\zeta_1,\ldots,\zeta_m)$, where $\zeta_1,\ldots,\zeta_m$ are roots of unity, which is a radical extension of $\Q$, so $f_{A_i}$ solvable by radicals.
\end{proof}

We now give an example of a strongly connected weightable quiver whose adjacency matrix has a characteristic polynomial that is not solvable by radicals.
\vspace{4mm}
\begin{example}
Let $\gamma$ be the $8$-weightable undirected quiver shown in Figure \ref{fig:not-a-cc}, with the reduced $8$-weighting shown in blue.
\begin{figure}[h]
    \centering
    \includegraphics[width=12cm]{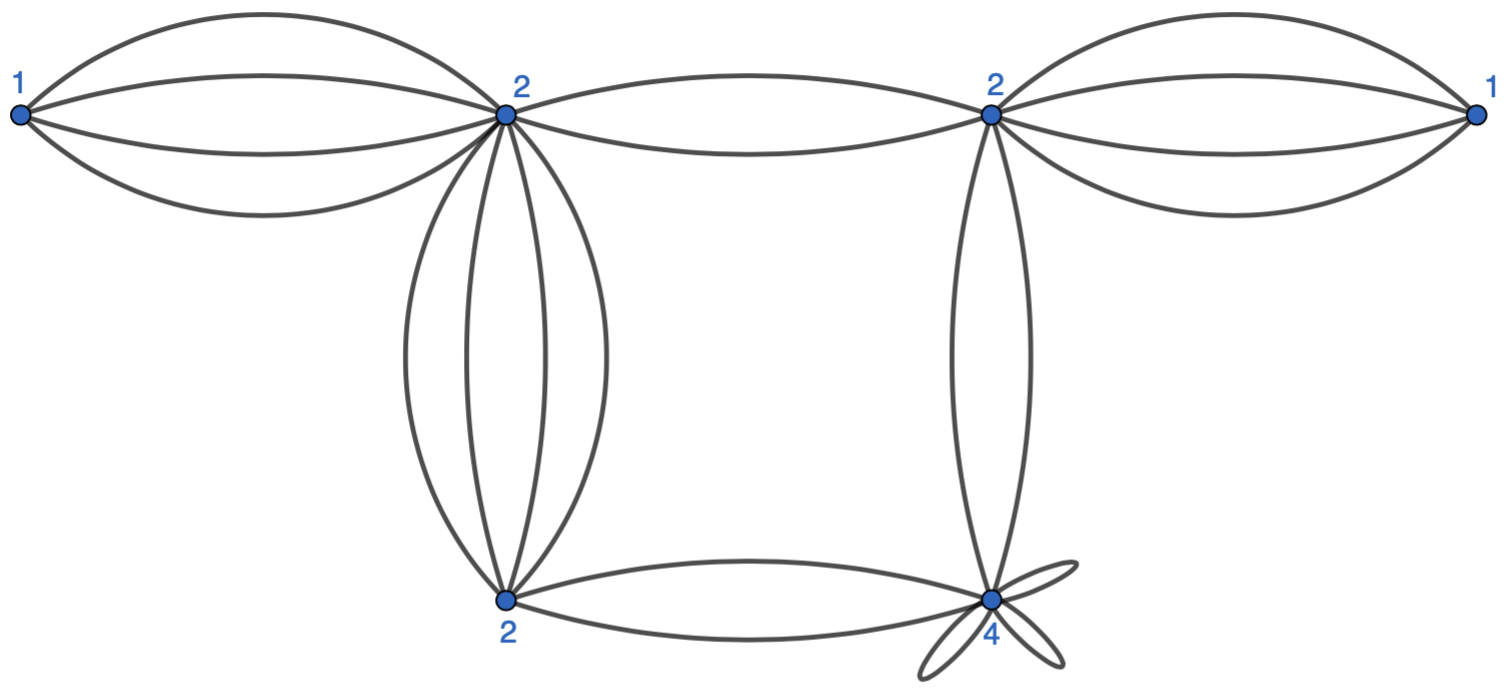}
    \caption{The $8$-weightable undirected quiver $\gamma$.}
    \label{fig:not-a-cc}
\end{figure}

The characteristic polynomial of the adjacency matrix of $\gamma$ is
\[
f_\gamma(x)=(x-8)(x^5+2x^4-44x^3-40x^2+400x+128).
\]
The Galois group of $x^5+2x^4-44x^3-40x^2+400x+128$ over $\Q$ is $S_5$, which is not solvable.

A polynomial is solvable by radicals if and only if its Galois group is a solvable. Hence, $f_\gamma$ is not solvable by radicals, and so $\gamma$ cannot be a connected component of a McKay quiver.
\end{example}

\bibliographystyle{plain}
\bibliography{bibliography}

\end{document}